\documentclass[a4paper]{article}
\usepackage{epsf,algorithm,epsfig,amsfonts,amsgen,amsmath,amstext,amsbsy,amsopn,amsthm,lineno,amssymb}
\usepackage{color,xcolor}
\usepackage{tikz}
\usepackage{cite}
\usepackage{enumerate}
\usepackage{float}
\usepackage{epstopdf}
\allowdisplaybreaks[4]

\usepackage{amssymb}

\setlength{\textwidth}{150mm} \setlength{\oddsidemargin}{7mm}
\setlength{\evensidemargin}{7mm} \setlength{\topmargin}{-5mm}
\setlength{\textheight}{245mm} \topmargin -18mm

\newtheorem{theorem}{Theorem}

\newtheorem{lemma}[theorem]{Lemma}

\newtheorem{claim}{Claim}

%%%%%%%%%%%%%%%%%%%%%%%%%%%%%%%%%%%%%%%

\newcounter{mathitem}

\baselineskip 15pt

\usetikzlibrary{decorations.markings}
\tikzstyle{vertex}=[circle, draw, inner sep=0pt, minimum size=5pt]

\DeclareMathOperator{\ex}{ex}

\DeclareMathOperator{\rb}{rb}

	\begin{document}

		\title{\bf\Large Rainbow matchings in edge-colored graphs \thanks{This research was supported by National Key Research and Development Program of China (No.~2023YFA1010203), and National Natural Science Foundation of China (Nos. ~12401464, 12471334, and 12271425).}}
	\date{}
	\author{
		Hongliang Lu$^{a}$,
		Zixuan Yang$^{b,c}$\thanks{Corresponding author. },
				Feihong Yuan$^{a}$		~\\[2mm]
		\small $^{a}$School of Mathematics and Statistics, \\ \small Xi'an Jiaotong University, Xi'an, Shaanxi 710049, P.R. China\\
		\small $^{b}$School of Mathematics and Statistics, \\
		\small Northwestern Polytechnical University, Xi'an, Shaanxi 710129, P.R. China\\
		\small $^{c}$Xi'an-Budapest Joint Research Center for Combinatorics, \\
		\small Northwestern Polytechnical University, Xi'an, Shaanxi 710129, P.R. China
		}
	\maketitle

%%%%%%%%%%%%%%%%%%%%%%%%%%%%%%%%%%%%%%%%%%%%%%%%%%%%%%%%%%%%%%%%%%%%%%%%%

\begin{abstract}
		Let $G$ be an edge-colored graph. We use $e(G)$ and $c(G)$ to denote the number of edges  and colors in $G$, respectively. A subgraph $H$ is called rainbow if $c(H)=e(H)$.    Li et al. (\emph{European J. Combin.}, \textbf{ 36} (2014), 453–459) proved that every edge-colored graph on $n$ vertices with $e(G)+c(G) \geq n(n+1)/2$ contains rainbow triangles. Later, Xu et al. (\emph{European J. Combin.}, \textbf{54 } (2016), 193–200) generalized
	the previous results concerning rainbow triangles to rainbow cliques $K_r$, where $r\geq 4$.  
	In this paper, we consider the existence of rainbow matchings of size $k$ in general edge-colored
	graphs $G$ under the condition of  $e(G)+c(G)$,   
	and  the condition in our result is tight.

	\medskip
	\noindent {\bf Keywords:} Edge-colored graph; rainbow matching; edge number; color number 
	\smallskip
\end{abstract}

	\setcounter{footnote}{0}
\renewcommand{\thefootnote}{}
	\footnotetext{E-mail addresses:  {\tt luhongliang@mail.xjtu.edu.cn (H. Lu), yangzixuan@nwpu.edu.cn (Z. Yang), fhyuan@stu.xjtu.edu.cn (F. Yuan)}}

%%%%%%%%%%%%%%%%%%%%%%%%%%%  1.  Preliminaries  %%%%%%%%%%%%%%%%%%%%%%%%%
%%%%%%%%%%%%%%%%%%%%%%%%%%%%%%%%%%%%%%%%%%%%%%%%%%%%%%%%%%%%%%%%%%%%%%%%%

\section {\large Introduction}

\noindent All graphs considered in this paper are simple, finite, and undirected. For terminology and notation not defined herein, we refer to Bondy and Murty \cite{Bon}.

Given a graph $H$, let ex$(n, H)$ denote the maximum number of edges in an $n$-vertex graph that does not contain $H$ as a subgraph.   
In 1907,  Mantel  \cite{Man} determined the maximum number of edges in a triangle-free graph on
$n$ vertices is $\lfloor n^2/4\rfloor$. This result was later generalized by Turán's celebrated theorem \cite{Tur} in 1941. For $H = kK_2$, where $kK_2$ denotes a matching of size $k$, the value $\ex(n, kK_2)$ was established by Erdős and Gallai \cite{Erd2}.

\begin{theorem}[Erd\H{o}s and Gallai, \cite{Erd2}]\label{Erd2}
	
	For all $n \geq 2k$ and $k\geq 1$,
	\begin{align*}
		\ex(n,kK_2)=\max\Bigg\{\binom{k-1}{2}+(k-1)(n-k+1),\binom{2k-1}{2}\Bigg\}.
	\end{align*}
\end{theorem}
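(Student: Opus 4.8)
The plan is to establish the two inequalities separately, the lower bound being the routine half. For $\ex(n,kK_2)\ge\max\{\cdots\}$ it suffices to exhibit two $kK_2$-free $n$-vertex graphs attaining the two listed quantities. First take the join $K_{k-1}\vee\overline{K_{n-k+1}}$: every edge meets the clique part of $k-1$ vertices, so that part is a vertex cover and the graph has matching number at most $k-1$, while it has exactly $\binom{k-1}{2}+(k-1)(n-k+1)$ edges. Second take the disjoint union of $K_{2k-1}$ with $n-2k+1$ isolated vertices; it has $\binom{2k-1}{2}$ edges and only $2k-1$ non-isolated vertices, hence matching number at most $k-1$. (Both graphs are defined since $n\ge 2k$.)

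For the upper bound, let $G$ be an arbitrary $n$-vertex graph with matching number $\nu(G)\le k-1$. By the Tutte--Berge formula there is a set $S\subseteq V(G)$ with $o(G-S)-|S|=n-2\nu(G)\ge n-2k+2$, where $o(\cdot)$ counts the components of odd order. Write $s=|S|$ and $q=o(G-S)$, so that $q\ge s+n-2k+2$; since $G-S$ has $n-s$ vertices and at least $q$ components, $q\le n-s$, and hence $s\le k-1$. I would then bound $e(G)$ by separating the edges lying inside $S$ (at most $\binom{s}{2}$), the edges between $S$ and $V(G)\setminus S$ (at most $s(n-s)$), and the edges inside $G-S$. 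If the components of $G-S$ have orders $n_1,\dots,n_r$ with $r\ge q$, then iterating the elementary identity $\binom{a+b-1}{2}-\binom{a}{2}-\binom{b}{2}=(a-1)(b-1)\ge 0$ (for integers $a,b\ge 1$) gives $\sum_i\binom{n_i}{2}\le\binom{n-s-r+1}{2}\le\binom{n-s-q+1}{2}\le\binom{2k-2s-1}{2}$, where the last two steps use $r\ge q\ge s+n-2k+2$, $q\le n-s$, and the monotonicity of $\binom{\cdot}{2}$ on the positive integers. Thus $e(G)\le g(s):=\binom{s}{2}+s(n-s)+\binom{2k-2s-1}{2}$.

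It remains to maximize $g$ over the integers $s\in\{0,1,\dots,k-1\}$. A short computation shows $g(s)=\tfrac32 s^2+(n-4k+\tfrac52)s+(2k^2-3k+1)$, a quadratic in $s$ with positive leading coefficient, hence convex; therefore $g(s)\le\max\{g(0),g(k-1)\}$ on this range. Since $g(0)=2k^2-3k+1=\binom{2k-1}{2}$ and $g(k-1)=\binom{k-1}{2}+(k-1)(n-k+1)$, we obtain $e(G)\le\max\{\binom{k-1}{2}+(k-1)(n-k+1),\,\binom{2k-1}{2}\}$, which together with the constructions above gives the theorem. The substantive steps are the estimate for the edges inside $G-S$ (a one-line smoothing argument) and the verification that the single-variable bound $g$ is convex, so that its maximum over the feasible interval is attained at an endpoint; I expect the little bit of care needed with the Tutte--Berge set $S$ to be the only real friction. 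If one preferred to avoid Tutte--Berge, an alternative is to start from a maximum matching $M$ of size $k-1$, let $U$ be the independent set of $M$-unmatched vertices, and use augmenting-path arguments to show that each edge of $M$ sends only few edges to $U$ and that the edges inside $V(M)$ and the edges into $U$ cannot both be plentiful; there the augmenting-path case analysis would be the main obstacle.
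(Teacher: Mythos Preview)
The paper does not give its own proof of this theorem; it is simply quoted as the classical Erd\H{o}s--Gallai result from \cite{Erd2} and used as a tool throughout the main argument, so there is nothing to compare your proof against.

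That said, your argument is correct. The constructions $K_{k-1}\vee\overline{K_{n-k+1}}$ and $K_{2k-1}\cup\overline{K_{n-2k+1}}$ are the standard extremal examples, and for the upper bound your use of the Tutte--Berge set $S$, the smoothing bound
\[
\sum_i\binom{n_i}{2}\le\binom{(n-s)-r+1}{2}\le\binom{(n-s)-q+1}{2}\le\binom{2k-2s-1}{2},
\]
and the convexity of $g(s)=\tfrac32 s^2+(n-4k+\tfrac52)s+(2k^2-3k+1)$ on $\{0,1,\dots,k-1\}$ are all valid; the endpoint values $g(0)=\binom{2k-1}{2}$ and $g(k-1)=\binom{k-1}{2}+(k-1)(n-k+1)$ check out. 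This is essentially the modern textbook proof via the Gallai--Edmonds structure. It is worth noting that, although the paper does not prove Theorem~\ref{Erd2}, its main proof repeatedly applies exactly the same machinery (Theorem~\ref{GE-thm} plus the smoothing inequality~(\ref{fact}) plus convexity of the resulting quadratic in $s$); in particular your function $g(s)$ is, after setting $n=2k$, precisely the function $f(s)$ appearing in the proof of Claim~\ref{c4}. So your approach is entirely in the spirit of the paper.
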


This paper focuses on the rainbow extensions of extremal problems. A subgraph of an edge-colored graph is called \emph{rainbow} if all its edges have distinct colors. For a fixed graph $H$, the \emph{rainbow number} $\rb(n, H)$ is defined as the minimum number of colors $r$ such that every edge-coloring of $K_n$ with $r$ colors contains a rainbow copy of $H$.  Equivalently, the \emph{anti-Ramsey number} $\text{ar}(n, H)$ is the maximum number of colors $r$ for which there exists an $r$-edge-coloring of $K_n$ with no rainbow $H$. It follows from these definitions that $\rb(n, H) = \text{ar}(n, H) + 1$. For consistency, we will use the rainbow number throughout this paper.

The study of anti-Ramsey theory was initiated by Erdős, Simonovits, and Sós \cite{Erd1},  who showed that  $\rb(n,K_r)=\ex(n,K_{r-1})+2$ for sufficiently large $n$.  This result was later extended to all $n > r \geq 3$ by Montellano-Ballesteros and Neumann-Lara \cite{Mon1} and independently by Schiermeyer \cite{Sch}. 
For the rainbow number of matchings, Schiermeyer \cite{Sch} proved that $\rb(n, kK_2) = \text{ex}(n, (k-1)K_2) + 2$ for $k \geq 3$ and $n \geq 3k + 3$.  Fujita et al. \cite{Fuj1} later obtained the same result for $n \geq 2k + 1$. Chen, Li, and Tu \cite{Che} eventually determined the exact values of $\rb(n, kK_2)$ for all $k \geq 3$ and $n \geq 2k$. Haas and Young \cite{Has} provided a simplified proof for the case $n = 2k$. For further results on rainbow numbers, we refer to the surveys by Kano and Li \cite{Kan} and Fujita, Magnant, and Ozeki \cite{Fuj}.

By considering both the number of edges $e(G)$ and the number of colors $c(G)$ in an edge-colored graph $G$, Li et al. \cite{Lib} established a rainbow version of Mantel’s theorem: if $e(G) + c(G) \geq \binom{n+1}{2}$, then $G$ contains a rainbow triangle.  This extends an earlier anti-Ramsey result of Erdős, Simonovits, and Sós \cite{Erd1}.  In 2019, Fujita et al. \cite{Fuj3} characterized all graphs $G$ satisfying $e(G) + c(G) \geq \binom{n+1}{2} - 1$ that contain no rainbow triangles. Ehard and Mohr \cite{Eha} further generalized the result of Li et al. \cite{Lib} by providing a sufficient condition on $e(G) + c(G)$ for the existence of $k$ rainbow triangles in $G$. The existence of rainbow triangles \cite{Aha,Cad,Czy} and the number of vertex-disjoint rainbow triangles \cite{Hu,Lih,Lix} in edge-colored graphs have also been extensively studied.

Following this line of research, Xu et al. \cite{Xu} extended results on rainbow triangles to larger rainbow cliques. They proved that if $e(G) + c(G) \geq \binom{n}{2} + \rb(n, K_r)$, then $G$ contains a rainbow $K_r$, generalizing the results of Montellano-Ballesteros and Neumann-Lara \cite{Mon1} and Schiermeyer \cite{Sch}. Recently, Liu, Peng, and Yang \cite{Liu} obtained a supersaturation version of the result of Xu et al. \cite{Xu}.

In this paper, we study the existence of rainbow matchings of size $k$ in general edge-colored graphs under a condition involving the sum of the number of edges and colors.

\begin{theorem}\label{main-thm}
	Let $k\geq 2$ and $n\ge 12$ be integers, and let $G$ be an edge-colored graph  on $n\geq 2k$ vertices. If
	\begin{align*}
		e(G)+c(G)\geq
			\binom{n}{2}+2+\max\Bigg\{\binom{2k-2}{2}-\binom{n-2k+2}{2},\ex(n,(k-1)K_2)\Bigg\},
				\end{align*}
	then $G$ contains a rainbow matching of size $k$.
\end{theorem}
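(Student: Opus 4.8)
The plan is to argue by contradiction. Suppose $G$ contains no rainbow matching of size $k$, and write $A:=\binom{2k-2}{2}-\binom{n-2k+2}{2}$ and $B:=\ex(n,(k-1)K_2)$, so that the hypothesis reads $e(G)+c(G)\ge\binom n2+2+\max\{A,B\}$. Fix a \emph{maximum} rainbow matching $M=\{f_1,\dots,f_t\}$ with $f_i=a_ib_i$; since $G$ has no rainbow $kK_2$ we have $t\le k-1$. Put $U:=V(M)$ (so $|U|=2t$) and $W:=V(G)\setminus U$ (so $|W|=n-2t$). Call a colour \emph{old} if it is the colour of some $f_i$, and \emph{new} otherwise; there are exactly $t$ old colours.

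The first reduction is routine: if an edge of $G[W]$ carried a new colour, adjoining it to $M$ would give a rainbow $(t+1)$-matching, so every edge inside $W$ is old, $c(G[W])\le t$, and, crucially, \emph{every new colour occurs on some edge meeting $U$}. Writing $q$ for the number of new colours we get $c(G)=t+q$ exactly, and since $e(G)\le\binom n2$, putting $\bar e:=\binom n2-e(G)$ the hypothesis becomes $q+t-\bar e\ge 2+\max\{A,B\}$. Hence it suffices to prove
\[
q+t-\bar e\ \le\ 1+\max\{A,B\}.
\]
This cannot be done by bounding $q$ alone: the graph $K_{2k-2}\vee\overline{K_{n-2k+2}}$ with the clique rainbow-coloured and one extra colour on all cross-edges has no rainbow $kK_2$ yet has $q+t$ of order $\binom{2k-2}{2}$ (while $\bar e=\binom{n-2k+2}{2}$). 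So the content is a \emph{trade-off} between new colours on one side and non-edges---and colour-repetitions inside $U$---on the other, i.e.\ a supersaturation-type estimate.

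The engine of the proof is a family of exchange (``swapping'') lemmas along $M$. Split $q=q_{\mathrm{in}}+q_{\mathrm{cross}}$, where $q_{\mathrm{in}}$ counts new colours occurring on some edge inside $U$ (so $q_{\mathrm{in}}$ is at most the number of non-matching edges inside $U$ carrying a new colour, hence at most $\binom{2t}{2}-t$), and $q_{\mathrm{cross}}$ counts new colours occurring \emph{only} on $U$--$W$ edges. The bound on $q_{\mathrm{cross}}$ is the heart of the matter, via: (i) if $w\ne w'$ in $W$ and $wa_i,\,w'b_i$ have distinct new colours then $M-f_i+wa_i+w'b_i$ is a rainbow $(t+1)$-matching; (ii) if disjoint cross-edges $wa_i,\,w'a_j$ with $i\ne j$ have distinct new colours then $b_ib_j\notin E(G)$ or $c(b_ib_j)$ is old, else $M-f_i-f_j+wa_i+w'a_j+b_ib_j$ is a rainbow $(t+1)$-matching; (iii) an old edge $xy$ inside $W$ with $c(xy)=c(f_i)$ forces (by applying (i) to $M-f_i+xy$) every edge from $\{a_i,b_i\}$ to $W\setminus\{x,y\}$ to be old, and indeed the colour $c(f_i)$ to be absent from $G[W\setminus\{x,y\}]$. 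Iterating such exchanges shows that the new cross-colours are ``localised''---essentially all of them emanate from a single vertex of $W$ or out of a single pair $\{a_i,b_i\}$---and that every new cross-colour beyond a bounded number can be charged, \emph{injectively}, to a non-edge of $G$ or to a colour-repetition inside $U$; in the extreme case where many cross-colours emanate from one vertex $w^\ast$, step (iii) forces $W\setminus\{w^\ast\}$ to be nearly independent, so $\bar e$ is then very large. Combining this charging with the bound on $q_{\mathrm{in}}$ gives, after bookkeeping, an estimate of the shape $q+t-\bar e\le 1+\big(\binom{2t}{2}-\binom{n-2t}{2}\big)+(\text{correction})$, where the correction is negligible when $n$ is close to $2k$ and is absorbed by $B$ when $n$ is large.

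It remains to split on $t$. The function $g(t):=\binom{2t}{2}-\binom{n-2t}{2}$ increases in $t$ (by $2n-2$ per step), so its maximum over $t\le k-1$ is $g(k-1)=A$. The substantive case is $t=k-1$: then $|U|=2k-2$, and using $q_{\mathrm{in}}\le\binom{2k-2}{2}-(k-1)$, the swapping control on $q_{\mathrm{cross}}$, the identity $\binom n2-\binom{n-2k+2}{2}=\binom{2k-2}{2}+(2k-2)(n-2k+2)$, and $\bar e\ge\binom{n-2k+2}{2}-e(G[W])$, a careful computation yields $q+t-\bar e\le 1+A\le 1+\max\{A,B\}$, with equality exactly at $K_{2k-2}\vee\overline{K_{n-2k+2}}$ coloured as above. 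When $t\le k-2$ the set $W$ is larger and the swapping constraints stronger, and a coarser version of the same count---using $n\ge\max\{2k,12\}$ to make $\max\{A,B\}$ large enough---again gives $q+t-\bar e\le 1+\max\{A,B\}$, with equality now at $K_n$ coloured with $\ex(n,(k-1)K_2)+1$ colours in the anti-Ramsey extremal way. Either way we contradict $q+t-\bar e\ge 2+\max\{A,B\}$, which proves the theorem. The genuine obstacle is the bookkeeping in the exchange step: one must charge every surplus new cross-colour to a \emph{distinct} non-edge or colour-repetition, and it is precisely to keep this charging injective and to make the two terminal estimates hold simultaneously---avoiding the small-$n$, small-$k$ anomalies in $\ex(n,(k-1)K_2)$---that the hypotheses $n\ge2k$ and $n\ge12$ are needed.
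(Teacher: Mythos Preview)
Your approach is genuinely different from the paper's. The paper fixes a rainbow spanning subgraph $G^r$ with $e(G^r)=c(G)$, applies the Gallai--Edmonds structure theorem to $G^r$ to obtain a set $S$ and factor-critical components $D_1,\dots,D_q$ of $G^r-S$, and then runs a long case analysis on $s=|S|$ and on the component sizes $d_i$, treating $n=2k$ and $n\ge 2k+1$ separately; several subcases are closed by invoking $\ex(m,(k{-}1)K_2)$ or $\ex(a,b,kK_2)$ inside carefully chosen subgraphs. You instead work with a maximum rainbow matching $M$ of $G$ directly and try to control the quantity $q+t-\bar e$ via exchange (swapping) lemmas and a charging scheme.

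The proposal, however, has a real gap: the heart of the argument is never carried out. You assert that ``iterating such exchanges shows that the new cross-colours are localised'' and that ``every new cross-colour beyond a bounded number can be charged, \emph{injectively}, to a non-edge of $G$ or to a colour-repetition inside $U$'', but no such charging is defined, let alone shown to be injective. Lemmas (i)--(iii) give only local pairwise constraints; many globally distinct configurations satisfy them, and it is not evident how they force the precise inequality $q_{\mathrm{cross}}-\bar e\le 1-\binom{n-2t}{2}$ you need. (There is also a slip in (ii): if $wa_i,w'a_j$ carry distinct new colours, the conclusion should allow $c(b_ib_j)\in\{c(wa_i),c(w'a_j)\}$ as well, not merely ``old'', which weakens the lemma.) The ``careful computation'' for $t=k-1$ and the ``coarser version of the same count'' for $t\le k-2$ are not performed either; you yourself name ``the bookkeeping in the exchange step'' as ``the genuine obstacle'', and that is exactly what is missing.

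To see why this bookkeeping is not routine: with $t=k-1$, the graph $G[W]$ may have many edges (each is only required to carry one of the $k-1$ old colours), so $\bar e$ need not be large a priori; and lemma~(iii) constrains only the cross-edges out of the \emph{specific} pair $\{a_i,b_i\}$ whose colour appears on a given $W$-edge. Converting these local facts into the tight trade-off matching all three extremal constructions simultaneously requires a structural classification at least as fine as the paper's Gallai--Edmonds case split, and your outline gives no mechanism for producing it.
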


We note that the condition on $e(G) + c(G)$ in Theorem \ref{main-thm} is best possible in a certain sense. Two natural constructions based on extremal graphs for matchings yield the lower bound $\binom{n}{2} + 2 + \mathrm{ex}(n, (k-1)K_2)$, while in the third construction, the host graph $G$ is obtained from $K_n$ by removing the edges of a clique. 

Let  $[n] := {1, 2, \ldots, n}$, and $\binom{[n]}{2} := { T \subseteq [n] : |T| = 2 }$.

\textbf{Construction \uppercase\expandafter{\romannumeral1}:}  $G$ is the complete graph on $[n]$. Color edges in $\left\{e\in\binom{[n]}{2}:e\cap [k-2]\ne \emptyset\right\}$ with $\binom{k-2}{2}+(k-2)(n-k+2)$ different colors and the others with a common new color.

\textbf{Construction \uppercase\expandafter{\romannumeral2}:}  $G$ is the complete graph on $[n]$. Color edges in $\binom{[2k-3]}{2}$ with $\binom{2k-3}{2}$ different colors, and the others with a common new color.

\textbf{Construction \uppercase\expandafter{\romannumeral3}:}  $G=\left\{e\in\binom{[n]}{2}:e\cap [2k-2]\ne \emptyset\right\}$. Color the edges in $\binom{[2k-2]}{2}$ with $\binom{2k-2}{2}$ different colors and color the others with a common new color.

\section{Preliminaries}

Let $G = (V(G), E(G))$ be a graph.
For a vertex $u\in V(G)$, its \emph{degree} is denoted by $d_G(u)$, and
the minimum vertex degree in $G$ is denoted by  $\delta(G)$. The neighborhood of $u$ is denoted by $N_G(u)$. For a set $S\subseteq V(G)$, define $N_G(S)=\bigcup_{u\in S}N_G(u)$.  For two disjoint subsets $S,T\subseteq V(G)$, let $E_{G}(S,T)$ denote the set of edges between $S$ and $T$, and write $e_G(S,T)=|E_G(S,T)|$.  For a subset $S\subseteq V(G)$ (or $F\subseteq E(G)$), the subgraph \emph{induced } by  $S$ (or $F$) is denoted by $G[S]$ (or $G[F]$). The graph obtained by deleting $S$ (or $F$) is denoted by $G-S$ (or $G-F$).  

The \emph{union} of two vertex-disjoint graphs $G$ and $H$, denoted by $G\cup H$, is defined by taking the union of their vertex sets and edge sets. The \emph{join}, denoted by $G \vee H$, is the graph obtained from $G \cup H$ by adding every possible edge between $V(G)$ and $V(H)$.

We use $K_n$, $K_{a,b}$, $P_n$, $S_n$, and $K_n^{-}$ to denote the complete graph on $n$ vertices, the complete bipartite graph with parts of sizes $a$ and $b$, the path on $n$ vertices, the star with $n$ edges (and $n+1$ vertices), and the graph obtained from $K_n$ by deleting one edge, respectively.

In an edge-colored graph $G$, the color of an edge $e\in E(G)$ is denoted by $C(e)$.  The set of edges in $G$ sharing the same color as $e$ is denoted by $EC(e)$.

A \emph{matching} in $G$ is a  set of disjoint edges.
A  \emph{maximum matching} is one of the largest possible size, and the \emph{matching number} $\nu(G)$ is the size of a maximum matching. A matching is \emph{perfect} if it covers all vertices of $G$. 
A graph $G$  is  \emph{factor-critical} if $G-\{v\}$  has a perfect matching for every $v\in V(G)$.  We will use the following fundamental result (see Exercise 3.3.18 in \cite{Lov}).

\begin{theorem}[Lov\'{a}sz and Plummer, \cite{Lov}]\label{GE-thm}
	Let $G$ be a graph without a perfect matchings. Then  there exists a subset  $S\subseteq V(G)$ such that:
	\begin{itemize}
		\item [\rm{(i)}] every component of $G-S$ is factor-critical;
		\item [\rm{(ii)}] every maximum matching  in $G$  matches each vertex of $S$ with vertices in different components of $G-S$.
	\end{itemize}
\end{theorem}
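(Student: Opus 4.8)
The classical route proves this via the set $D(G)$ of vertices missed by some maximum matching, but I would instead build $S$ directly as a maximum-size \emph{barrier} and extract both conclusions from a single deficiency count. For $X\subseteq V(G)$ let $o(G-X)$ denote the number of odd components of $G-X$, and let $\mathrm{def}(G)$ be the number of vertices left unsaturated by a maximum matching of $G$. By the Berge--Tutte formula, $\mathrm{def}(G)=\max_{X\subseteq V(G)}(o(G-X)-|X|)$; call a set attaining this maximum a barrier. I would fix a barrier $S$ whose cardinality $|S|$ is as large as possible. Since $G$ has no perfect matching, $\mathrm{def}(G)\ge 1$, so barriers exist and $S$ is well defined.

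The heart of the matter is that this maximal barrier already gives (i): every component of $G-S$ is factor-critical. First I would exclude even components. If $H$ were an even component of $G-S$ and $v\in V(H)$, then $H-v$ has odd order, so $G-(S\cup\{v\})$ has at least one more odd component than $G-S$; hence $o(G-(S\cup\{v\}))-|S\cup\{v\}|\ge o(G-S)-|S|=\mathrm{def}(G)$, so $S\cup\{v\}$ is a strictly larger barrier, contradicting maximality. Next I would exclude odd components that are not factor-critical. If an odd component $H$ is not factor-critical, pick $v\in V(H)$ with $H-v$ having no perfect matching; as $H-v$ has even order, $\mathrm{def}(H-v)\ge 2$, and Berge--Tutte applied inside $H-v$ yields $W_0\subseteq V(H)\setminus\{v\}$ with $o((H-v)-W_0)-|W_0|\ge 2$. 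Replacing $S$ by $S\cup\{v\}\cup W_0$ and accounting for odd components (the odd component $H$ disappears and is replaced by the components of $(H-v)-W_0$) shows this set is again a barrier but strictly larger than $S$, again a contradiction. Therefore every component of $G-S$ is odd and factor-critical, establishing (i).

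For (ii) I would use a tight count against a fixed maximum matching $M$. Let $H_1,\dots,H_q$ be the components of $G-S$; since each is odd, $q=o(G-S)$ and $\mathrm{def}(G)=q-|S|$. For each $i$ let $t_i$ be the number of vertices of $H_i$ matched by $M$ to $S$ and $u_i$ the number of vertices of $H_i$ left unsaturated by $M$; oddness of $|H_i|$ forces $t_i+u_i\ge 1$. Every edge of $M$ lies inside some $H_i$, inside $S$, or between $S$ and a single $H_i$, so writing $r$ for the number of edges of $M$ inside $S$ one gets that the number of unsaturated vertices is at least $q-|S|+2r$ plus the total slack in the inequalities $t_i+u_i\ge 1$. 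Since $M$ is maximum this number equals $\mathrm{def}(G)=q-|S|$, so all the slack vanishes: $r=0$, every vertex of $S$ is matched by $M$ into the components, and each $H_i$ has exactly one vertex that is either matched to $S$ or unsaturated. In particular each component absorbs at most one vertex of $S$, so the $|S|$ vertices of $S$ are matched into $|S|$ distinct components, which is (ii). The step I expect to be the main obstacle is the pair of enlargement arguments in the second paragraph---especially invoking Berge--Tutte inside a single component to deal with an odd, non-factor-critical $H$---together with making the equality analysis of this final count fully precise; the remaining parity bookkeeping is routine.
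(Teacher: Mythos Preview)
The paper does not prove this statement; it is quoted from the literature (Exercise~3.3.18 in Lov\'asz--Plummer, \emph{Matching Theory}) and used as a black box throughout Section~3. There is therefore no proof in the paper to compare yours against.

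Your argument is a correct and standard derivation via maximal barriers. The two enlargement steps go through exactly as you describe: an even component $H$ yields a larger barrier $S\cup\{v\}$ because $H-v$ has odd order and hence contains at least one odd component; an odd non-factor-critical component yields a larger barrier $S\cup\{v\}\cup W_0$ because Berge--Tutte applied inside $H-v$ produces $W_0$ with $o((H-v)-W_0)\ge |W_0|+2$, and replacing the single odd component $H$ by the components of $(H-v)-W_0$ preserves the deficiency while strictly enlarging the set. One small point to make explicit in your equality count for (ii): allow for vertices of $S$ left unsaturated by $M$. If $s_0$ denotes their number, then $\sum_i t_i = |S| - 2r - s_0$, and the total number of unsaturated vertices is exactly $q-|S|+2r+2s_0+\sum_i(t_i+u_i-1)$; equality with $\mathrm{def}(G)=q-|S|$ then forces $r=s_0=0$ and $t_i+u_i=1$ for every $i$, giving precisely the conclusion you state.
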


For positive integers $ a\geq b\geq 1$, and a graph $F$, define $ex (a ,b , F )$ as the maximum number of edges in an $F$-free subgraph of $K_{a,b}$. We will use the following result.

\begin{theorem}[Li, Tu, and Jin, \cite{Li}]\label{lem-Li}
	Let $a\geq b\geq k \geq 3$. Then
	$
	\ex(a,b,kK_2)=a(k-1).
	$
\end{theorem}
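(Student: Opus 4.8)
The plan is to prove the two inequalities separately, using König's theorem to translate the matching condition into a statement about vertex covers. Write the bipartition of $K_{a,b}$ as $A\cup B$ with $|A|=a\ge b=|B|$, and let $H$ denote a spanning subgraph of $K_{a,b}$. Since $kK_2$ is precisely a matching of size $k$, the statement that $H$ is $kK_2$-free is equivalent to $\nu(H)\le k-1$. Thus the quantity $\ex(a,b,kK_2)$ is the maximum number of edges of a subgraph $H\subseteq K_{a,b}$ with $\nu(H)\le k-1$.

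For the lower bound I would exhibit an extremal configuration attaining $a(k-1)$. Choose any $k-1$ vertices of the smaller side $B$ (possible since $b\ge k>k-1$) and join each of them to all $a$ vertices of $A$, deleting every other edge. The resulting subgraph is a copy of $K_{k-1,a}$, so it has exactly $(k-1)a$ edges, and its matching number is $\min\{k-1,a\}=k-1$ because $a\ge k$. Hence it contains no $kK_2$, which gives $\ex(a,b,kK_2)\ge a(k-1)$.

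For the upper bound, let $H\subseteq K_{a,b}$ be any $kK_2$-free subgraph, so $\nu(H)\le k-1$. Because $H$ is bipartite, König's theorem supplies a vertex cover $C\subseteq A\cup B$ with $|C|=\nu(H)\le k-1$. Every edge of $H$ has an endpoint in $C$, so $e(H)\le\sum_{v\in C}d_H(v)$. A vertex of $A$ has all its neighbours in $B$ and a vertex of $B$ has all its neighbours in $A$, whence $d_H(v)\le\max\{a,b\}=a$ for every vertex $v$. Therefore $e(H)\le |C|\cdot a\le(k-1)a$, and combining this with the construction yields $\ex(a,b,kK_2)=a(k-1)$.

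The argument is short enough that there is no serious obstacle; the only points requiring care are that the extremal construction really has matching number exactly $k-1$ (which uses $a\ge k$), and that the crude degree bound $d_H(v)\le a$ already suffices for the upper bound, so one never has to analyze how the cover $C$ splits between $A$ and $B$. If a proof avoiding König were desired, I would instead induct on $k$ by deleting a maximum-degree vertex together with the edges it covers, but the König-based route above is cleaner and is self-contained modulo a single standard theorem.
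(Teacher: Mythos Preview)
Your argument is correct. The lower bound construction is the standard extremal example, and the upper bound via K\"onig's theorem is clean: since $H$ is bipartite with $\nu(H)\le k-1$, a minimum vertex cover $C$ has $|C|\le k-1$, and every edge is incident with some vertex of $C$; the crude degree bound $d_H(v)\le a$ for all $v$ (using $a\ge b$) then gives $e(H)\le |C|\cdot a\le (k-1)a$.

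As for comparison with the paper: the paper does not prove this theorem at all---it is quoted as a result of Li, Tu, and Jin \cite{Li} and used as a black box in the main argument. So there is no ``paper's own proof'' to compare against; your self-contained K\"onig-based proof simply supplies what the paper takes for granted.
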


We also require the following classical result on Hamiltonian cycles.

\begin{theorem}[Dirac, \cite{Dir}]\label{Dir}
	
	Let $G$ be a graph on $n\geq 3$ vertices. If  $\delta(G)\geq \frac{n}{2}$, then $G$ is hamiltonian.
	
\end{theorem}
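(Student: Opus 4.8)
The plan is to run the classical longest-path argument, which requires none of the machinery developed earlier in this paper. I would first observe that the degree hypothesis forces $G$ to be connected: if $G$ were disconnected, its smallest component would have at most $\lfloor n/2\rfloor$ vertices, so any vertex in that component would have degree at most $\lfloor n/2\rfloor-1<n/2$, contradicting $\delta(G)\ge n/2$. With connectivity secured, I fix a \emph{longest} path $P=v_0v_1\cdots v_\ell$ in $G$. Maximality of $P$ immediately yields that every neighbor of the endpoint $v_0$, and every neighbor of the endpoint $v_\ell$, already lies on $P$; otherwise $P$ could be extended at one of its ends.

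The heart of the argument is to rotate $P$ into a cycle on the \emph{same} vertex set. I set $A=\{\,i:v_0v_i\in E(G)\,\}$ and $B=\{\,i:v_{i-1}v_\ell\in E(G)\,\}$, both viewed as subsets of $\{1,2,\ldots,\ell\}$. Since all neighbors of $v_0$ and of $v_\ell$ lie among $v_1,\ldots,v_\ell$, we have $|A|=d_G(v_0)\ge n/2$ and $|B|=d_G(v_\ell)\ge n/2$. Because the path has at most $n$ vertices, $\ell\le n-1$, so $|A|+|B|\ge n>\ell$ forces $A\cap B\ne\emptyset$ by inclusion--exclusion. Picking any $i\in A\cap B$, the two chords $v_0v_i$ and $v_{i-1}v_\ell$ combine with the subpaths of $P$ to form the cycle $C=v_0v_1\cdots v_{i-1}v_\ell v_{\ell-1}\cdots v_iv_0$, which spans exactly the $\ell+1$ vertices $v_0,\ldots,v_\ell$.

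It then remains to upgrade $C$ to a Hamiltonian cycle by showing $\ell+1=n$. Suppose instead some vertex $w$ lies outside $V(C)$. By connectivity there is an edge from $V(G)\setminus V(C)$ into $V(C)$, so I may assume $w$ is adjacent to some $v_j\in V(C)$. Removing one of the two $C$-edges incident to $v_j$ turns $C$ into a Hamiltonian path of $C$ with endpoint $v_j$, and prepending the edge $wv_j$ produces a path on $\ell+2$ vertices, strictly longer than $P$ — contradicting the maximality of $P$. Hence $V(C)=V(G)$ and $C$ is the required Hamiltonian cycle. The step I expect to be the main obstacle is the counting that produces $i\in A\cap B$: one must align the indices so that a common index genuinely closes up, and the deliberate off-by-one shift between ``$v_0v_i$'' in $A$ and ``$v_{i-1}v_\ell$'' in $B$ is exactly what makes the rotation yield a valid cycle rather than a path with a crossing chord.
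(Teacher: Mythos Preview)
Your argument is the standard longest-path/rotation proof of Dirac's theorem and is correct as written; the index shift between $A$ and $B$ is handled properly so that a common index genuinely closes the path into a cycle on $V(P)$. Note, however, that the paper does not supply its own proof of this statement: Theorem~\ref{Dir} is quoted as a classical preliminary result with a citation to Dirac~\cite{Dir} and is used as a black box in the proof of Theorem~\ref{main-thm}, so there is no in-paper argument to compare against.
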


\section{Proof of Theorem \ref{main-thm}}

Let $G^{r}$ be a rainbow subgraph of $G$ obtained by selecting exactly one edge from each color class, so that $e(G^{r}) = c(G)$. For convenience,  define 
\begin{align*}
f_1(n,k):=\binom{2k-2}{2}-\binom{n-2k+2}{2}+2,
\end{align*}
\begin{align*}
f_2(n,k):=\binom{2k-3}{2}+2,
\end{align*}
 and 
 \begin{align*}
 f_3(n,k):=\binom{k-2}{2}+(n-k+2)(k-2)+2.
 \end{align*}

We proceed by contradiction. Suppose that $G^{r}$ does not contain a matching of size $k$. By Theorem~\ref{GE-thm} (i), there exists a subset $S \subseteq V(G^{r})$ such that every component of $G^{r} - S$ is factor-critical. Let $ |S|=s$, and let the components of $G^{r} - S$ be  $D_1, D_2, \cdots, D_q$ . For each $i \in [q]$, let $d_i = |V(D_i)|$ and fix an arbitrary vertex $u_i \in V(D_i)$. Without loss of generality, assume $d_1 \geq d_2 \geq \cdots \geq d_q \geq 1$.

Since $e(G)\leq \binom{n}{2}$, we have
\begin{align}\label{eq1}
	e(G^{r})=c(G)&\geq \binom{n}{2}+\max\{f_1(n,k),f_2(n,k),f_3(n,k)\}-e(G)\notag\\
	&\geq\max\{f_1(n,k),f_2(n,k),f_3(n,k)\}\notag\\
	&\ge \ex(n,(k-1)K_2)+2,
\end{align}
which implies that $G^{r}$ has a matching of size $k-1$, i.e., $\nu(G^{r})=k-1$.
By Theorem \ref{GE-thm} (i) and (ii), we obtain
\begin{align}\label{eq2}
	\nu(G^{r})=k-1=s+\sum_{i=1}^{q}\frac{d_i-1}{2},
\end{align}
\begin{align}\label{eq2'}
	|V(G^r)|=n=s+\sum_{i=1}^{q}d_i,
\end{align}
and
\begin{align}\label{eq3}
	e(G^{r})=e(G^r[S])+e_{G^r}(S,V(G^r)\setminus S)+\sum_{i=1}^qe(D_i)\leq \binom{s}{2}+(n-s)s+\sum_{i=1}^{q}\binom{d_i}{2}.%=f(s,d_1,d_2,\ldots,d_i).
\end{align}

For three integers $a\geq b\geq c\geq 0$, we have the following inequality which will be used frequently in the sequel
\begin{align}\label{fact}
\binom{a}{2}+\binom{b}{2}\leq \binom{a+b-c}{2}+\binom{c}{2}.
\end{align}

The following proof  relies on Theorem~\ref{GE-thm} and is naturally divided into two cases ($n=2k$ and $n\ge 2k+1$), depending on whether the target rainbow matching is perfect.

\subsection{The Perfect Matching Case}

A direct calculation shows that $f_1(2k,k) = \max\{f_1(2k,k), f_2(2k,k), f_3(2k,k)\}$. Therefore, inequality (\ref{eq1}) implies
\begin{align}\label{eq4}
	e(G^r) \geq f_1(2k,k) = \binom{2k-2}{2} + 1.
\end{align}

\begin{claim}  \label{c4}
$0\leq s\leq 1$.
\end{claim}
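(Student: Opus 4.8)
The plan is to push the contradiction hypothesis one step further: suppose, in addition to $\nu(G^r)=k-1$, that $s\geq 2$, and derive a violation of the edge bound (\ref{eq4}). The first move is to exploit that we are in the perfect‑matching case $n=2k$. Subtracting (\ref{eq2}) from (\ref{eq2'}) gives $\sum_{i=1}^{q}\frac{d_i+1}{2}=k+1$, i.e.\ $\sum_{i=1}^{q}d_i+q=2k+2$; combining this with $\sum_{i=1}^{q}d_i=2k-s$ (which is (\ref{eq2'}) for $n=2k$) yields the clean identity $q=s+2$, so in particular $q-1=s+1$. Thus the number of components of $G^r-S$ is completely pinned down by $s$.

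Next I would bound $\sum_{i=1}^{q}\binom{d_i}{2}$ from above. Since $d_1\geq\cdots\geq d_q\geq 1$ and $\sum_{i=1}^{q}d_i=2k-s$, I apply inequality (\ref{fact}) repeatedly with $c=1$ — legitimate because at each stage the current largest part is at least $1$ and is at least every remaining part — to collapse all the mass onto a single part, which gives
\begin{align*}
\sum_{i=1}^{q}\binom{d_i}{2}\leq\binom{\textstyle\sum_{i=1}^{q}d_i-(q-1)}{2}=\binom{2k-2s-1}{2},
\end{align*}
using $\binom{1}{2}=0$ and $q-1=s+1$. (This is consistent with the parity restriction that each $d_i$ is odd, since $D_i$ is factor‑critical and $2k-2s-1$ is odd.) Substituting this and $e_{G^r}(S,V(G^r)\setminus S)\leq(2k-s)s$ into (\ref{eq3}) produces
\begin{align*}
e(G^r)\leq\binom{s}{2}+(2k-s)s+\binom{2k-2s-1}{2}.
\end{align*}

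Finally I would compare this with the lower bound $e(G^r)\geq f_1(2k,k)=\binom{2k-2}{2}+1$ from (\ref{eq4}). The difference $\binom{s}{2}+(2k-s)s+\binom{2k-2s-1}{2}-\binom{2k-2}{2}-1$, regarded as a polynomial in $s$, has leading coefficient $\frac{1}{2}-1+2=\frac{3}{2}>0$ and is therefore convex; hence on the integer range $2\leq s\leq k-1$ — the upper end coming from (\ref{eq2}) since $\sum\frac{d_i-1}{2}\geq 0$ — it attains its maximum at one of the two endpoints. A direct computation shows its value at $s=2$ equals $8-2k$ and its value at $s=k-1$ equals $\frac{1}{2}(-k^2+7k-8)$, and both are negative for every integer $k\geq 6$, which holds because $n=2k\geq 12$. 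This contradicts (\ref{eq4}), so $s\leq 1$.

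I expect the only genuinely fiddly point to be that last endpoint arithmetic, and it is exactly there that the hypothesis $n\geq 12$ is needed: for small $k$ the counting bounds above do not by themselves exclude $s=2$ (a single large factor‑critical component together with a few singletons and a two‑element $S$), so the argument really does require $k$ to be at least $6$.
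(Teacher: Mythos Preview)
Your proof is correct and follows essentially the same route as the paper's: both bound $\sum_{i=1}^{q}\binom{d_i}{2}$ by $\binom{2k-2s-1}{2}$ via repeated use of (\ref{fact}), feed this into (\ref{eq3}), observe the resulting bound is a convex quadratic in $s$, and check that both endpoints $s=2$ and $s=k-1$ fall strictly below $\binom{2k-2}{2}+1$ when $k\ge 6$. The only cosmetic difference is that you first derive the identity $q=s+2$ to reach the expression $2k-2s-1$, whereas the paper obtains the same quantity directly from (\ref{eq2}) as $d_1+\sum_{i\ge 2}(d_i-1)$; the arithmetic and the use of the hypothesis $n\ge 12$ are identical.
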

\begin{proof}
By (\ref{eq2}), we suppose on the contrary that $2 \le s \le k-1$.
It follows from (\ref{eq3})  that
\begin{align*}
	e(G^r)&\leq\binom{s}{2}+\sum_{i=1}^{q}\binom{d_i}{2}+(n-s)s\\
	&\leq \binom{s}{2}+\binom{d_1+\sum_{i=2}^{q}(d_i-1)}{2}+(2k-s)s\quad(\mbox{by (\ref{fact})})\\
	&=\binom{s}{2}+(2k-s)s+\binom{2k-1-2s}{2}\quad(\mbox{by (\ref{eq2})})\\
	%&=\binom{2k-1-s}{2}+(s+1)s\\
	&=\frac{3s^2}{2}-2ks+\frac{5s}{2}+2k^2-3k+1=:f(s).
	%&= s^2+s+\frac{(2k-1-s)(2k-2-s)}{2}.
\end{align*}

By the convexity of $f$, it holds that
\begin{align*}
	e(G^r)\leq\max\{f(2),f(k-1)\}
	%&=\max\Big\{2k^2-7k+12,\frac{3k^2}{2}-\frac{3k}{2}\Big\}\\
	<2k^2-5k+4=\binom{2k-2}{2}+1,
\end{align*}
which contradicts  (\ref{eq4}). 
This completes the proof of Claim \ref{c4}.
\end{proof}
\begin{claim}\label{c5} 
$d_2=1$.
\end{claim}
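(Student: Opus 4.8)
The plan is to argue by contradiction: assume $d_2\ge 3$. Since every component $D_i$ of $G^{r}-S$ is factor-critical, each $d_i$ is odd, so this assumption is exactly the negation of $d_2=1$, and it moreover forces $d_1\ge d_2\ge 3$. First I would pin down $q$: combining $(\ref{eq2})$ and $(\ref{eq2'})$ with $n=2k$ gives $q=s+2$, so $q=2$ if $s=0$ and $q=3$ if $s=1$ (recall $0\le s\le 1$ by Claim~\ref{c4}). A convenient preliminary estimate is
\[
2k=n=s+\sum_{i=1}^{q}d_i\ \ge\ s+d_1+d_2+(q-2)\ \ge\ s+3+3+s\ =\ 2s+6,
\]
hence $k\ge s+3$. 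This already disposes of the tiny configurations (such as $s=0,k=2$ and $s=1,k\in\{2,3\}$), in which $d_2\ge 3$ simply cannot occur, and it leaves $k\ge 3$ when $s=0$ and $k\ge 4$ when $s=1$.

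Next I would bound $e(G^{r})$ from above, starting from $(\ref{eq3})$ and repeatedly invoking the inequality $(\ref{fact})$. First absorb the (possibly empty) list of small components $D_3,\dots,D_q$ into $D_1$ one at a time, each step using $(\ref{fact})$ with $c=1$; this is legitimate because the running copy of $D_1$ never shrinks and stays the largest part, and it replaces $\sum_{i=1}^{q}\binom{d_i}{2}$ by $\binom{d_1'}{2}+\binom{d_2}{2}$ with $d_1'+d_2=2k-2s$ and $d_1'\ge d_1\ge d_2\ge 3$. Then apply $(\ref{fact})$ once more to the pair $(d_1',d_2)$ with $c=3$, which gives $\sum_{i=1}^{q}\binom{d_i}{2}\le\binom{2k-2s-3}{2}+\binom{3}{2}$. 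Substituting this into $(\ref{eq3})$ yields
\[
e(G^{r})\ \le\ \binom{s}{2}+(2k-s)s+\binom{2k-2s-3}{2}+3 .
\]

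Finally I would compare this upper bound with $(\ref{eq4})$ in the two cases. Using $\binom{2k-2}{2}-\binom{2k-3}{2}=2k-3$ and $\binom{2k-2}{2}-\binom{2k-5}{2}=6k-12$, the displayed bound becomes $\binom{2k-2}{2}-(2k-6)$ when $s=0$ and $\binom{2k-2}{2}-(4k-14)$ when $s=1$; since $k\ge 3$ in the first case and $k\ge 4$ in the second, in both cases $e(G^{r})<\binom{2k-2}{2}+1$, contradicting $(\ref{eq4})$. Therefore $d_2=1$.

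I do not expect a genuine obstacle here: the argument is a routine counting estimate. The only points that require care are verifying the hypothesis $a\ge b\ge c\ge 0$ of $(\ref{fact})$ at every merge, and making sure the small values of $k$ are honestly handled — which is exactly why it is worth isolating the elementary inequality $k\ge s+3$ before any computation begins.
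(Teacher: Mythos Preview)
Your proof is correct and follows essentially the same route as the paper's own proof: assume $d_2\ge 3$, bound $\sum_i\binom{d_i}{2}$ via repeated applications of $(\ref{fact})$ to reach $e(G^r)\le\binom{s}{2}+(2k-s)s+\binom{2k-2s-3}{2}+3$, and then check that this falls below $\binom{2k-2}{2}+1$ for $s\in\{0,1\}$. The only presentational difference is that you explicitly derive $k\ge s+3$ to dispose of the small cases, whereas the paper simply invokes $n=2k\ge 12$; your variant is slightly sharper but not materially different.
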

\begin{proof}
By contradiction, suppose that $d_2 \geq 3$. From inequality (\ref{eq3}), we derive 
\begin{align}\label{eq5}
	e(G^r)&\leq\binom{s}{2}+(n-s)s+\binom{d_1}{2}+\binom{d_2}{2}+\sum_{j=3}^{q}\binom{d_j}{2}\notag\\
	&\leq \binom{s}{2}+(2k-s)s+\binom{d_1+(d_2-3)+\sum_{j=3}^{q}(d_j-1)}{2}+\binom{3}{2}\quad(\mbox{by (\ref{fact})})\notag\\
	&=\binom{s}{2}+(2k-s)s+\binom{2k-3-2s}{2}+3\quad(\mbox{by (\ref{eq2})}).
\end{align}
Note that $n=2k\ge 12$, and $s \in \left\{0, 1\right\}$ by Claim~\ref{c4}. Substituting into (\ref{eq5}) gives
\[
e(G^r) \leq \max\left\{2k^2 - 7k + 9,2k^2 - 9k + 17\right\} < \binom{2k - 2}{2} + 1,
\]
which contradicts (\ref{eq4}). This completes the proof of Claim~\ref{c5}.
\end{proof}
By Claim~\ref{c4}, we   consider  the following two cases.

\vspace{2mm}\textbf{Case 1.}~$s=0$.
\vspace{2mm}

By Claim~\ref{c5} and (\ref{eq2}), we have $d_2 = 1$ and $\nu(G^r) = k-1 = (d_1 - 1)/2$, which implies $d_1 = 2k-1$. 
Recall that $V(D_2)=\{u_2\}$. 

We first show that there exists a vertex $w_1 \in V(D_1)$ such that $u_2w_1 \in E(G)$. Otherwise, we have
\begin{align*}
	e(G)+e(G^r)\leq 2\binom{2k-1}{2}< \binom{2k}{2}+\binom{2k-2}{2}+1,
\end{align*}
contradicting (\ref{eq1}).

Next, we prove $e(G^r) \leq \binom{2k-2}{2} + 2$. Suppose the contrary. Consider the graph $H = G^r - \{w_1\} - EC(w_1 u_2)$ obtained by removing vertex $w_1$ and the edges of color $C(w_1 u_2)$. Then
\begin{align*}
	e(H)&\geq e(G^r)-d_{G^r}(w_1)-1\\
	&>\binom{2k-2}{2}+2-(2k-2)-1
	%&=2k^2-7k+6\\
	=ex(2k-2,(k-1)K_2),
\end{align*}
Hence, $H$ contains a rainbow matching $M_1$ of size $k-1$. Since $M_1$ avoids the color $C(w_1 u_2)$, the set $M_1 \cup \{w_1 u_2\}$ forms a rainbow matching of size $k$ in $G$, a contradiction. % We therefore conclude that $e(G^r) \leq \binom{2k-2}{2} + 2$.

Combining $e(G^r) \leq \binom{2k-2}{2} + 2$ with  (\ref{eq1}), we obtain 
\begin{align*}
	 e(G)\geq\binom{2k}{2}+\binom{2k-2}{2}+1-e(G^{r})
	=\binom{2k}{2}-1,
\end{align*}
which implies that at most one vertex in $V(D_1)$ is not adjacent to $u_2$ in $G$. Let $W \subseteq V(D_1)$ be the set of vertices adjacent to $u_2$ in $G$, then $|W| \geq 2k-2$.

 We now show that there exists a vertex $w_2 \in W$ such that $d_{G^r}(w_2) \leq 2k-3$. Suppose not. Since $D_1$ is factor-critical, $\delta(D_1)\ge 2$. Then we have
\begin{align*}
	e(G^{r})&\ge \frac{1}{2}\sum_{v\in V(D_1)}d_{G^r}(v)
	\geq \frac{1}{2}((2k-2)(2k-2)+2)
	%&=\frac{1}{2}(4k^2-8k+6)\\
	%&=2k^2-4k+3\\
	>\binom{2k-2}{2}+2,
\end{align*}
contradicting the upper bound on $e(G^r)$. Hence, such a vertex $w_2 \in W$ must exist.

Now fix $w_2 \in W$ with $d_{G^r}(w_2) \leq 2k-3$. We claim that $e(G^r) = \binom{2k-2}{2} + 1$. Otherwise,
\begin{align*}
	e(G^{r}-\{w_2\}-EC(u_2w_2))&\geq e(G^{r})-d_{G^{r}}(w_2)-1\\
	&\geq\binom{2k-2}{2}+2-(2k-3)-1\\
	&>2k^2-7k+6=ex(2k-2,(k-1)K_2),
\end{align*}
which implies the existence of a rainbow matching $M_2$ of size $k-1$ in $G^r - \{w_2\} - EC(u_2w_2)$ that avoids the color $C(u_2w_2)$. Then $M_2 \cup \{u_2w_2\}$ is a rainbow matching of size $k$ in $G$, a contradiction.

From $e(G^r) = \binom{2k-2}{2} + 1$ and (\ref{eq1}), we deduce that $e(G) = \binom{2k}{2}$, so $G \cong K_{2k}$. We next show that there exists a vertex $w_3 \in V(D_1)$ with $d_{G^r}(w_3) \leq 2k-4$. Suppose not; that is, $d_{G^r}(w) \geq 2k-3$ for all $w \in V(D_1)$. Then
\begin{align*}
	e(G^{r})&\geq \frac{1}{2}(2k-1)(2k-3)
	%&=\frac{1}{2}(4k^2-8k+3)\\
	%=2k^2-4k+\frac{3}{2}
	>\binom{2k-2}{2}+1,
\end{align*}
a contradiction. Therefore, such a vertex $w_3$ exists.

Finally, consider the graph $G^r - \{w_3\} - EC(w_3u_2)$. We have
\begin{align*}
	e(G^{r}-\{w_3\}-EC(w_3u_2))&\geq e(G^{r})-d_{G^{r}}(w_3)-1\\
	&\geq\binom{2k-2}{2}+1-(2k-4)-1\\
	&>2k^2-7k+6=ex(2k-2,(k-1)K_2),
\end{align*}
so there exists a rainbow matching $M_3$ of size $k-1$ in this graph avoiding color $C(u_2w_3)$. Then $M_3 \cup \{u_2w_3\}$ is a rainbow matching of size $k$ in $G$, the final contradiction.

\vspace{2mm}\textbf{Case 2.}~$s=1$.
\vspace{2mm}

By Claim~\ref{c5} and (\ref{eq2}), we have $d_2 = 1$ and $\nu(G^r) = k - 1 = 1 + (d_1 - 1)/2$, which implies $d_1 = 2k - 3$, and then by (\ref{eq2'}), $d_3=1$ and $d_4=0$. Recall that $V(D_2) = \{u_2\}$, and $V(D_3) = \{u_3\}$. Let $S=\{v\}$. Then by (\ref{eq4}), we obtain
\begin{align}\label{2.2}
\binom{2k-2}{2}+1\leq e(G^{r})=e(D_1)+e_{G^r}(\{v\},V(D_1))+e_{G^r}(\{v\},\{u_2,u_3\})\leq\binom{2k-2}{2}+2.%=\binom{2k-2}{2}+2.
\end{align}

 We first prove that $u_2 u_3 \notin E(G)$. Suppose, for contradiction, that $u_2 u_3 \in E(G)$. Then
\begin{align*}
	e(G^{r}-\{u_2,u_3\}-EC(u_2u_3))&\geq e(G^{r})-d_{G^{r}}(u_2)-d_{G^{r}}(u_3)-1\\
	&\geq\binom{2k-2}{2}+1-1-1-1\\
	&>ex(2k-2,(k-1)K_2),
\end{align*}
so $G^r - \{u_2, u_3\} - EC(u_2 u_3)$ contains a matching $M_4$ of size $k - 1$ that avoids color $C(u_2 u_3)$. Thus, $M_4 \cup \{u_2 u_3\}$ forms a rainbow matching of size $k$ in $G$, a contradiction.

Since $u_2 u_3 \notin E(G)$,  $e(G) \leq \binom{2k}{2} - 1$. Together with (\ref{2.2}) and (\ref{eq1}), this gives
\[ e(G)=\binom{2k}{2}-1\ \ \  \text{and}\ \ \  e(G^r)=\binom{2k-2}{2}+2.\]

We now show that for any $w \in V(D_1)$,
$C(u_2w)=C(vu_3)$ and $C(u_3w)=C(vu_2)$.
 Suppose, for some $w \in V(D_1)$, that $c(u_2 w) \ne c(v u_3)$. Then
\begin{align*}
e(G^{r}-\{v,w\}-EC(u_2w))&\geq e(G^r)-d_{G^r}(v)-(d_{G^r}(w)-1)-1\\
&\geq \binom{2k-2}{2}+2-(2k-1)-(2k-4)-1\\
%&=2k^2-9k+8\\
%&>2k^2-11k+15~~(since~k> 4)\\
&>ex(2k-4,(k-2)K_2),
\end{align*}
which implies the existence of a matching $M_5$ of size $k - 2$ in $G^r - \{v, w\} - EC(u_2 w)$ avoiding color $C(u_2 w)$. Thus, $M_5 \cup \{v u_3, u_2 w\}$ is a rainbow matching of size $k$ in $G$, a contradiction. A similar contradiction is reached by symmetry if $c(u_3 w) \ne c(v u_2)$. 

Now, fix $w_4, w_5 \in V(D_1)$ such that
$C(u_2w_4)=C(vu_3)$ and $C(u_3w_5)=C(vu_2)$.
Then
\begin{align*}
e(G^{r}-\{w_4,w_5,u_2,u_3\}-EC(u_2w_4,u_3w_5))
&\geq\binom{2k-2}{2}+2-(2k-3)-(2k-4)-2-2\\
%&=2k^2-9k+8\\
%&>2k^2-11k+15~~(since~k> 4)\\
&>ex(2k-4,(k-2)K_2),
\end{align*}
so there exists a rainbow matching $M_6$ of size $k - 2$ in $G^{r}-\{w_4,w_5,u_2,u_3\}$ that avoids colors $C(u_2 w_4)$ and $C(u_3 w_5)$.
Thus, $M_6 \cup \{u_2 w_4, u_3 w_5\}$ is a rainbow matching of size $k$ in $G$, the final contradiction.

\subsection{The Non-Perfect Matching Case}

We will use the following two lemmas. The first lemma is due to Chen, Li, and Tu~\cite{Che}.

\begin{lemma}[Chen, Li, and Tu, Lemma 3.5 in \cite{Che}]\label{lemma2}
	Let $G$ be the complete graph, and suppose  $G^r\in \{K_{2k-3}^{-}\cup K_3\cup \overline{K_{n-2k}}, K_{2k-3}\cup P_3\cup \overline{K_{n-2k}}\}$. Then $G$ contains a rainbow matching of size $k$.
\end{lemma}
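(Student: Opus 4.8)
The plan is to use that we are in the non-perfect matching regime, so $n\ge 2k+1$ and the set $B$ of vertices isolated in $G^r$ is nonempty; fix a vertex $w\in B$. Write $A$ for the vertex set of the $K_{2k-3}$ (respectively $K_{2k-3}^{-}$) component of $G^r$, and $\{a,b,c\}$ for the $K_3$ (respectively $P_3=a{-}b{-}c$) component. The one structural fact I will lean on is that, since $G^r$ meets every color class of $G$ in exactly one edge while all edges of $G^r$ lie inside $A$ or inside $\{a,b,c\}$, every color of $G$ occurs on some edge contained in $A$ or in $\{a,b,c\}$. In particular $C(ab),C(bc)$ (and also $C(ca)$ in the $K_3$ case) are pairwise distinct and differ from every color that appears on an edge of $G^r$ inside $A$ — call these latter colors the $A$-colors — because all of these edges belong to the rainbow graph $G^r$.

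The core of the proof will be to build a rainbow matching $M_0$ of size $k-1$ inside $G[A\cup\{w\}]\cong K_{2k-2}$ and then enlarge it by a single edge inside $\{a,b,c\}$. Any perfect matching of $A\cup\{w\}$ pairs $w$ with some $v\in A$ and then perfectly matches $A\setminus\{v\}\cong K_{2k-4}$ using $k-2$ edges; I take these $k-2$ edges among the edges of $G^r$ (which, in the $K_{2k-3}^{-}$ case, just means avoiding the unique edge $xy$ of $A$ that is not in $G^r$), so they form a rainbow set all of whose colors are $A$-colors. What remains is to choose $v$ together with this perfect matching so that $C(wv)$ is not repeated: if $C(wv)\in\{C(ab),C(bc),C(ca)\}$ nothing more is needed, and if $C(wv)$ is the color of some edge $e\subseteq A$ it suffices to take a perfect matching of $K_{2k-4}$ avoiding $e$ (and $xy$), which exists provided $k$ is not too small.

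With $M_0$ in hand, it covers $A\cup\{w\}$, leaves $a,b,c$ uncovered, and all of its colors are $A$-colors except possibly $C(wv)$. Hence at least one edge of $\{ab,bc\}$ (at least two of $\{ab,bc,ca\}$ in the $K_3$ case) carries a color not used by $M_0$; taking such an edge $e^{*}$ — which lies in $\{a,b,c\}$ and hence is disjoint from $V(M_0)=A\cup\{w\}$ — the set $M_0\cup\{e^{*}\}$ is a rainbow matching of size $k$ in $G$. I expect the only real obstacle to be the bookkeeping for small $k$: when $K_{2k-4}$ is too small to dodge a prescribed edge, or $A$ is so small that the choice of $v$ is over-constrained, one must fall back on a short direct check of these finitely many cases; for all larger $k$ the argument collapses to this single explicit construction, available precisely because of the extra vertex $w\in B$.
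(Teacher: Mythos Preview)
Your approach is sound, but note first that the paper does not supply its own proof of this lemma: it is quoted from Chen, Li, and Tu~\cite{Che}, so there is no in-paper argument to compare against directly. On its merits your construction is correct. Pairing an isolated vertex $w$ of $G^r$ with some $v\in A$, completing to a perfect matching of $A\setminus\{v\}$ inside $G^r[A]$ that avoids the unique $G^r$-edge of color $C(wv)$, and then adjoining an edge of the $K_3$/$P_3$ component whose color differs from $C(wv)$, really does produce a rainbow $k$-matching: the $k-2$ edges of $M'$ carry distinct $A$-colors by the rainbow property of $G^r$, the edge $e^*$ carries a non-$A$-color distinct from $C(wv)$ by construction, and $C(wv)$ is excluded from $M'$ by your avoidance step. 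For $k\ge 4$ the graph $G^r[A\setminus\{v\}]$ minus the at most two prescribed edges (namely $e_v$ and, in the $K_{2k-3}^-$ case, the missing edge $xy$) still has minimum degree at least $(2k-4)/2$, so Dirac's theorem yields the required perfect matching and the argument closes uniformly.

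Two remarks. First, you invoke ``the non-perfect matching regime, so $n\ge 2k+1$'' to guarantee $B\ne\emptyset$; as stated, the lemma formally allows $n=2k$ (with $\overline{K_{0}}$ empty), but since the paper only applies it inside Section~3.2 where $n\ge 2k+1$, your restriction is harmless for the paper's purposes---just be aware that a standalone version covering $n=2k$ would need a separate short argument. Second, your deferral of small $k$ to a direct check is unavoidable: for $k=3$ one has $|A\setminus\{v\}|=2$, and in the $K_{2k-3}^{-}$ case the choice of $v$ must be an endpoint of the deleted edge $xy$ (otherwise $G^r[A\setminus\{v\}]$ is edgeless), so the finitely many colorings there genuinely require enumeration.
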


\begin{lemma}\label{lemma1}
	Let $G$ be a graph obtained from $K_n$ by deleting at most one edge, and suppose   $	G^r\cong K_{2k-3}\cup K_3\cup \overline{K_{n-2k}}$. Then $G$ contains a rainbow matching of size $k$.
\end{lemma}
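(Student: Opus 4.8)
\textbf{Proof plan for Lemma \ref{lemma1}.}

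The plan is to analyze the structure of $G^r \cong K_{2k-3} \cup K_3 \cup \overline{K_{n-2k}}$ together with the edges of $G$ that lie outside $G^r$, and to extract a rainbow matching of size $k$ by a careful choice of two edges spanning the $K_3$-component and one of the isolated vertices (or two isolated vertices), then completing with $k-2$ rainbow edges inside the $K_{2k-3}$-component. Write $A$ for the vertex set of the $K_{2k-3}$-component, $T = \{x,y,z\}$ for the triangle component, and $I$ for the set of $n-2k$ isolated vertices of $G^r$; note $|A| = 2k-3$ is odd. Since $G$ is $K_n$ minus at most one edge, every pair of vertices is joined in $G$ except possibly one. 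The key observation is that $G^r[A] = K_{2k-3}$ is already rainbow (it is a clique component of a rainbow subgraph), and it contains a rainbow matching of size $k-2$ leaving exactly one vertex $a_0 \in A$ uncovered; likewise the triangle $xyz$ is rainbow. So the obstruction to finding a rainbow matching of size $k$ is purely about colors repeating between the triangle edges and the edges we would use to augment.

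First I would handle the case where some isolated vertex $v \in I$ is joined in $G$ to at least one vertex of $T$, say $vx \in E(G)$ (this happens unless $n = 2k$, since $|I| \ge 1$ and at most one edge is missing). Then $\{vx\} \cup \{\text{edge } yz \text{ of } T\}$ is a matching of size $2$ on four vertices, and I would delete these four vertices and the (at most two) color classes $EC(vx)$ and $C(yz)$ from $G^r$; the remaining rainbow graph still contains a rainbow $(k-2)K_2$ because its edge count is at least $\binom{2k-3}{2} - O(1)$, which by Theorem \ref{Erd2} exceeds $\ex(2k-3, (k-2)K_2)$ for the relevant range. Combining gives a rainbow $kK_2$ in $G$. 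The only remaining case is $n = 2k$, so $I = \emptyset$ and $G^r \cong K_{2k-3} \cup K_3$ on exactly $2k$ vertices, with $G$ being $K_{2k}$ or $K_{2k}$ minus an edge. Here I would use that $G$ has many edges between $A$ and $T$ that are not in $G^r$: for $a \in A$ and $t \in T$, the edge $at$ lies in $E(G) \setminus E(G^r)$, and its color $C(at)$ could collide with something. The strategy is to pick two vertices $t_1, t_2 \in T$ and two distinct vertices $a_1, a_2 \in A$ so that the edges $a_1 t_1$ and $a_2 t_2$ together with a rainbow $(k-3)$-matching in $A - \{a_1, a_2\}$ (note $|A| - 2 = 2k-5$ is odd, so a near-perfect matching of size $k-3$ exists and is rainbow) plus the remaining triangle vertex matched... — actually cleaner: match $t_3$ (the third triangle vertex) to a vertex $a_3 \in A$ as well, using three cross edges $a_1t_1, a_2t_2, a_3t_3$ plus a rainbow matching of size $k-3$ inside $A - \{a_1,a_2,a_3\}$, which has $2k-6$ vertices. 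One must ensure the three cross-edge colors are distinct from each other and from the $k-3$ interior colors; since each $C(a_it_i)$ equals at most one color class, and we have $\binom{2k-3}{2}$ worth of freedom in $A$ plus freedom in choosing which $a_i$ and $t_i$, a counting/pigeonhole argument (together with possibly appealing to Lemma \ref{lemma2} when the cross-edge colors are forced to coincide with triangle or interior colors, reducing to the $K_{2k-3}^- \cup K_3$ or $K_{2k-3} \cup P_3$ pattern after recoloring) closes this case.

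The main obstacle will be the $n = 2k$ case: when every cross edge between $A$ and $T$ has a color already appearing inside $A$ or inside $T$, one cannot naively augment, and this is exactly the regime where Lemma \ref{lemma2} is designed to help. So I expect the crux to be a reduction argument showing that if no rainbow $kK_2$ exists, then the coloring of $G = K_{2k}$ (or $K_{2k}^-$) restricted to the relevant edges must have the structure making $G^r$ equivalent, after choosing a different representative rainbow subgraph, to one of the two configurations in Lemma \ref{lemma2} — at which point that lemma delivers the contradiction. The edge-counting bounds, convexity of $f(s)$-type estimates, and the inequality \eqref{fact} used earlier in the perfect-matching case are not needed here; instead the work is combinatorial bookkeeping of color classes on a bounded number of vertices, and the case $n=2k \ge 12$, i.e. $k \ge 6$, gives enough room in $A$ for the pigeonhole step to go through.
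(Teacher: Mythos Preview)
Your main case ($I \neq \emptyset$) has a real gap: you pair the cross edge $vx$ (with $v \in I$, $x \in T$) with the opposite triangle edge $yz$, but you never rule out $C(vx) = C(yz)$. Since $vx \notin E(G^r)$, its color is carried in $G^r$ by exactly one edge, and that edge can perfectly well be $yz$; then $\{vx, yz\}$ is monochromatic and the combined matching is not rainbow. Trying $vy$ with $xz$ or $vz$ with $xy$ does not help in general, since $C(vx) = C(yz)$, $C(vy) = C(xz)$, $C(vz) = C(xy)$ can all hold simultaneously. The paper avoids this by taking the cross edge from $I$ to the big clique $A$ rather than to the triangle: with $w_i \in A$, $u \in I$, $w_i u \in E(G)$, the companion edge is any triangle edge, whose color automatically differs from every $D_1$-color, and one then only needs a $(k-2)$-matching in $D_1 - \{w_i\}$ avoiding the single color $C(w_i u)$. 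This comes from a clean two-case split on whether the $G^r$-representative of $C(w_i u)$ lies outside $E(D_1 - \{w_i\})$ (then there is nothing to avoid) or inside (delete that one edge and apply Dirac's theorem to $K_{2k-4}^{-}$, valid for $k \ge 4$); the case $k = 3$ is handled separately by a short pigeonhole on the three cross edges $w_j w'_j$ between $A$ and $T$.

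Your lengthy Case 2 ($n = 2k$) is misdirected. The lemma is stated and applied inside the non-perfect-matching section of the paper, where $n \ge 2k+1$ throughout, and the paper's own proof already uses an isolated vertex $u_3 \in I$ for $k \ge 4$; so neither the paper nor its applications need the $n = 2k$ case. The plan you sketch there (an unspecified pigeonhole, a hoped-for reduction to Lemma~\ref{lemma2}) is also the vaguest part of the proposal. The fix is simply to drop this case and repair Case~1 by switching to an $I$-to-$A$ cross edge as above.
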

\begin{proof}

	Let $V(D_1)=\left\{w_1,\dots,w_{2k-3}\right\}$, $V(D_2)=\left\{w'_1,w'_2,w'_3\right\}$, and $U=\left\{u_3,\ldots,u_{n-2k+2}\right\}$ denote the vertex sets of the components $K_{2k-3}$, $K_3$,  and $\overline{K_{n-2k}}$ in  $G^r$, respectively.  
%Since $e(G)\ge e(K_n)-1$ and $k\ge 3$, there is a vertex  $w_i\in V(D_1)$ such that $u_3w_i\in E(G)$, where $i\in [2k-3]$. If 
There exists a matching of size 3 in $G$ between $\left\{w_1,w_2,w_3\right\}$ and $V(D_2)$, and a vertex  $w_i\in V(D_1)$ such that $u_3w_i\in E(G)$, where $i\in [2k-3]$. Without loss of generality, assume $w_1w'_1,w_2w'_2,w_3w'_3\in E(G)$. 
	
	First, we consider the case $k=3$. If $\left\{w_1w'_1,w_2w_3,w'_2w'_3\right\}$ is not rainbow , then $C(w_1w'_1)\in \left\{C(w_2w_3),C(w'_2w'_3)\right\}$. By a similar argument, $C(w_2w'_2)\in \left\{C(w_1w_3),C(w'_1w'_3)\right\}$ and $C(w_3w'_3)\in \left\{C(w_1w_2),C(w'_1w'_2)\right\}$. It follows that $\left\{w_1w'_1,w_2w'_2,w_3w'_3\right\}$ is a rainbow matching, as desired.
	
	Next, we consider the case $k\ge 4$.  If the edge colored by $C(w_iu_3)$ in $G^r$ is  contained in $E(D_2) \cup E_{G^r}(\{w_i\}, V(D_1))$, then there exist  matchings of size $k-2$ and $1$ in $D_1-\{w_i\}$ and $D_2$ respectively that do not contain $C(w_iu_3)$. This implies that there is a rainbow matching of size $k$ in $G$, which is a contradiction. So we assume that  $EC(w_iu_3)\cap E(G^r) \in E(D_1-\{w_i\})$. Write $D'_1=D_1-\{w_i\}-EC(w_iu_3)$. Note that $\delta(D')= 2k-6\ge |V(D')|/2$. By Theorem \ref{Dir}, there exists a matching $M_2$ of size $k-2$ in $D'$ that does not contain $C(w_iu_3)$. Thus, $M_2\cup \{w'_1w'_2,w_iu_3\}$ is a rainbow matching of size $k$ in $G$, again a contradiction.
	\end{proof}	
	 %Suppose for contradiction that $G$ contains no rainbow matching of size $k$. We will show $C(w_1w'_1)= C(w'_2w'_3)$. Indeed, if $C(w_1w'_1)\ne C(w'_2w'_3)$, since $$e(D_1-\left\{w_1\right\}-EC(w_1w'_1))=\binom{2k-4}{2}-1>ex(2k-4,(k-2)K_2),$$ there exists a matching $M_1$ of size $k-2$ in $D_1-\{w_1\}$ that avoids color  $C(w_1w'_1)$. Then $M_1\cup \left\{w_1w'_1,w'_2w'_3\right\}$ is a rainbow matching of $k$ in $G$, which contradicts the assumption. Hence $C(w_1w'_1)=C(w'_2w'_3)$. By symmetry, $C(w_2w'_2)=C(w'_1w'_3)$ and $C(w_3w'_3)=C(w'_1w'_2)$. 
	
%	Now let $M_2$ be a matching of size $k-3$ in $D_1-\left\{w_1,w_2,w_3\right\}$. Then $M_2\cup \left\{w_1w'_1,w_2w'_2,w_3w'_3\right\}$ forms a rainbow matching of size $k$ in $G$, again a contradiction.

By  (\ref{eq2}), we have $0\le s\le k-1$. The proof next splits naturally into three cases according to the value of 
$s$.

\vspace{2mm}\textbf{Case 1.}~$s=k-1$.
\vspace{2mm}

By (\ref{eq2}) and (\ref{eq2'}), we have $d_1=1$ and  $q=n-k+1$. Recall that $V(D_i)=\{u_i\}$ for  $i=1,2,\ldots, q$. 

\begin{claim}\label{c1}
	For every $i\in[q]$, there exists some $j\in [q]$ (with $j\ne i$) such that $u_iu_j\in E(G)$.
\end{claim}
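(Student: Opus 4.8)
The plan is to argue by contradiction via a two-sided estimate of $e(G)$. Recall that in this case $s=k-1$, so (\ref{eq2}) forces $\sum_{i=1}^{q}(d_i-1)/2=0$; hence every $d_i=1$, every $D_i$ is a single vertex, and, writing $U=\{u_1,\dots,u_q\}$ with $q=n-k+1$, the set $U$ is independent in $G^r$. Suppose, contrary to the claim, that some $u_{i_0}$ has no neighbour in $U\setminus\{u_{i_0}\}$ in $G$; equivalently, $u_{i_0}$ is an isolated vertex of the induced subgraph $G[U]$. I will show that this is incompatible with the hypothesis on $e(G)+c(G)$.

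For the upper bound, I partition $E(G)$ into edges inside $S$, edges between $S$ and $U$, and edges inside $U$, which gives $e(G)\le\binom{s}{2}+sq+e_G(U)$. Since $u_{i_0}$ is isolated in $G[U]$, all $q-1$ potential edges at $u_{i_0}$ inside $U$ are missing, so $e_G(U)\le\binom{q}{2}-(q-1)=\binom{q-1}{2}$. With $s=k-1$ and $q=n-k+1$ this yields $e(G)\le\binom{k-1}{2}+(k-1)(n-k+1)+\binom{n-k}{2}$. For the lower bound, I use $c(G)=e(G^r)$ together with the structural estimate (\ref{eq3}), which here (all $d_i=1$, so $\sum_i\binom{d_i}{2}=0$) reads $c(G)\le\binom{s}{2}+(n-s)s=\binom{k-1}{2}+(k-1)(n-k+1)$. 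Substituting this into the hypothesis $e(G)+c(G)\ge\binom{n}{2}+2+M$, where $M:=\max\{\binom{2k-2}{2}-\binom{n-2k+2}{2},\,\ex(n,(k-1)K_2)\}$ is the maximum appearing in Theorem~\ref{main-thm}, gives $e(G)\ge\binom{n}{2}+2+M-\binom{k-1}{2}-(k-1)(n-k+1)$.

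Comparing the two bounds, a contradiction arises precisely when $\binom{n}{2}+2+M>2\binom{k-1}{2}+2(k-1)(n-k+1)+\binom{n-k}{2}$. To verify this I discard the first branch of $M$ and keep only $M\ge\ex(n,(k-1)K_2)\ge\binom{k-2}{2}+(k-2)(n-k+2)$, the linear branch supplied by Theorem~\ref{Erd2} applied with parameter $k-1$. Using the identity $\binom{n}{2}-\binom{n-k}{2}=k(n-k)+\binom{k}{2}$ and setting $m=n-k$, a direct expansion shows that the coefficient of $m$ in the difference is $k+(k-2)-2(k-1)=0$, so all terms linear in $n$ cancel, and the remaining constant difference simplifies to exactly $1$. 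Thus the strict inequality holds for every admissible $n$, contradicting the upper bound on $e(G)$; this contradiction establishes the claim.

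The computation is routine once it is set up, so the only place requiring care is the bookkeeping in this final comparison. The key points to confirm are that the $n$-linear coefficient genuinely vanishes, so that no hypothesis on $n$ beyond $n\ge 2k+1$ is invoked, and that even the weaker, linear branch $\binom{k-2}{2}+(k-2)(n-k+2)$ of $\ex(n,(k-1)K_2)$ already leaves a positive margin. Since that margin is the constant $1$, independent of both $n$ and $k$, the estimate is tight but sufficient, and neither the term $\binom{2k-2}{2}-\binom{n-2k+2}{2}$ nor any explicit rainbow-matching construction is needed.
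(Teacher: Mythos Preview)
Your proof is correct and is essentially the same argument as the paper's. The paper phrases it as a single inequality $e(G)+e(G^r)\le\binom{n}{2}-(n-k)+\binom{k-1}{2}+(n-k+1)(k-1)<\binom{n}{2}+f_3(n,k)$, whereas you split this into separate upper and lower bounds for $e(G)$; the underlying estimates (the $n-k$ missing edges at $u_{i_0}$, inequality~(\ref{eq3}), and comparison against the linear branch of $\ex(n,(k-1)K_2)$) are identical, and your final margin of~$1$ matches the paper's computation.
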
 
\begin{proof}

Suppose, by contradiction, that there exists a vertex  $u_i$ which is not adjacent to any other vertex $u_j$ in $G$. Then we have $e(G)\leq\binom{n}{2}-(n-k)$. Combining this with (\ref{eq3}), we derive
\begin{align*}
	e(G)+e(G^{r})&\leq  \binom{n}{2}-(n-k) + \binom{k-1}{2}+(n-k+1)(k-1)<\binom{n}{2}+f_3(n,k),
\end{align*}
which contradicts (\ref{eq1}). Therefore, the claim holds.
\end{proof}

Let $H^{r}_1$ be the bipartite graph obtained from $G^{r}$ by removing all edges in  $G^r[S]$.%, i.e., $H^{r}_1=G^{r}-E(G^{r}[S])$.

\begin{claim}\label{c2}
	For any distinct  $i,j\in[q]$, if $u_iu_j\in E(G)$, 
	then $$e(H^{r}_{1}-\{u_i,u_j\})\leq (n-k-1)(k-2)+1.$$
\end{claim}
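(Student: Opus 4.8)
We work with the bipartite graph $H^r_1$ whose parts are $S$ (with $|S|=k-1$) and $\{u_1,\dots,u_q\}$ (with $q=n-k+1$); note $e(H^r_1)=e_{G^r}(S,V(G^r)\setminus S)$. Fix $i,j$ with $u_iu_j\in E(G)$. The idea is that if $e(H^r_1-\{u_i,u_j\})$ were too large, we could find a large rainbow matching inside $H^r_1-\{u_i,u_j\}$ and then extend it by the edge $u_iu_j$. Concretely, $H^r_1-\{u_i,u_j\}$ is a subgraph of $K_{k-1,q-2}=K_{k-1,n-k-1}$. We want a rainbow matching of size $k-1$ in $H^r_1-\{u_i,u_j\}$ that avoids the single color $C(u_iu_j)$: together with $u_iu_j$ this gives a rainbow $k$-matching in $G$, a contradiction. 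So suppose for contradiction that $e(H^r_1-\{u_i,u_j\})\ge (n-k-1)(k-2)+2$.

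\textbf{Key steps.} First, I would remove from $H^r_1-\{u_i,u_j\}$ at most one edge to kill the color $C(u_iu_j)$ (if it appears there at all); since $H^r_1$ is rainbow, at most one edge is deleted. The resulting graph $H'$ has $e(H')\ge (n-k-1)(k-2)+1$ and is a rainbow subgraph of $K_{k-1,n-k-1}$ in which no edge has color $C(u_iu_j)$. Now I invoke Theorem~\ref{lem-Li}: with $a=n-k-1$, $b=k-1$, and target $(k-1)K_2$, we have $\ex(n-k-1,k-1,(k-1)K_2)=(n-k-1)(k-2)$, provided the hypotheses $a\ge b\ge k-1\ge 3$ hold, i.e.\ $n-k-1\ge k-1$ (equivalent to $n\ge 2k$, true) and $k\ge 4$. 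Since $e(H')>(n-k-1)(k-2)=\ex(n-k-1,k-1,(k-1)K_2)$, the graph $H'$ contains a (necessarily rainbow, as $H'\subseteq H^r_1$) matching of size $k-1$ avoiding $C(u_iu_j)$. Appending $u_iu_j$ yields a rainbow $k$-matching in $G$, contradicting our standing assumption. Hence $e(H^r_1-\{u_i,u_j\})\le (n-k-1)(k-2)+1$.

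\textbf{Anticipated obstacles.} The only real subtlety is the applicability of Theorem~\ref{lem-Li}, which requires both parts to have size at least $k$ — here we use it with target $(k-1)K_2$, so we need the parts to have size at least $k-1$ and $k-1\ge 3$, i.e.\ $k\ge 4$. The small cases $k=2$ and $k=3$ must be handled separately. For $k=2$, the claimed bound is $e(H^r_1-\{u_i,u_j\})\le 1$, and $H^r_1-\{u_i,u_j\}$ is a bipartite graph with part of size $|S|=k-1=1$, so it has at most $\max\{0,q-2\}$ edges all sharing a common vertex; one checks directly that more than one edge forces (together with $u_iu_j$) a rainbow $2$-matching. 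For $k=3$, the bound is $e(H^r_1-\{u_i,u_j\})\le (n-4)+1=n-3$, and $H^r_1-\{u_i,u_j\}\subseteq K_{2,n-4}$; if it had $\ge n-2$ edges, a short ad hoc argument (a bipartite graph with parts of size $2$ and $n-4\ge 2$ having more than $n-4$ edges contains $2K_2$) gives a rainbow $2$-matching avoiding $C(u_iu_j)$, which extends to a rainbow $3$-matching. These small-case verifications are elementary but should be spelled out; everything else is a direct application of the Li–Tu–Jin bound combined with the extension-by-$u_iu_j$ trick.
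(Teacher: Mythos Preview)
Your proof is correct and follows essentially the same approach as the paper's: both suppose the bound fails, delete the (at most one) edge of color $C(u_iu_j)$ from $H^{r}_{1}-\{u_i,u_j\}$, and invoke Theorem~\ref{lem-Li} to obtain a $(k-1)$-matching, which together with $u_iu_j$ gives the forbidden rainbow $k$-matching. Your explicit treatment of the cases $k=2,3$ (where the hypothesis of Theorem~\ref{lem-Li} is not met) is in fact more careful than the paper, which applies the theorem without comment on these small values.
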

\begin{proof}

Assume $u_iu_j\in E(G)$ and suppose, for contradiction, that   $e(H^{r}_{1}-\{u_i,u_j\})> (n-k-1)(k-2)+1$. Then
\begin{align*}
	e(H^{r}_{1}-\{u_i,u_j\}-EC(u_iu_j))>(n-k-1)(k-2)=\ex(n-k-1,k-1,(k-1)K_2)
\end{align*}
where the equality follows from Theorem \ref{lem-Li}. This implies that $H^{r}_{1}-\{u_i,u_j\}-EC(u_iu_j)$ contains a rainbow matching $M_3$ of size $k-1$ that avoids the color $C(u_iu_j)$. Thus, $M_3\cup \{u_iu_j\}$ forms a rainbow matching of size $k$ in $G$, a contradiction.
\end{proof}

\begin{claim}\label{c3}
	There exist  two integers $l$ and $p$ with $1\le l<p\le q$ such that $d_{G^{r}}(u_l)\leq k-2$ and $d_{G^{r}}(u_p)\leq k-2$.
\end{claim}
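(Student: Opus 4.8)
The plan is to show, by a counting argument, that at least two of the $q = n-k+1$ singleton vertices $u_1,\dots,u_q$ have $G^r$-degree at most $k-2$. Suppose for contradiction that at most one such vertex has degree $\le k-2$; then at least $q-1 = n-k$ vertices $u_i$ satisfy $d_{G^r}(u_i) \ge k-1$. Recall that $G^r - S$ consists only of isolated vertices here (since $d_1 = 1$), so every edge of $G^r$ incident to a $u_i$ goes into $S$, i.e. $d_{G^r}(u_i) = d_{H^r_1}(u_i)$ where $H^r_1$ is the bipartite graph of Claim \ref{c2}. Summing over the $u_i$'s from the $S$-side, $e(H^r_1) = \sum_{i=1}^q d_{H^r_1}(u_i) \ge (n-k)(k-1) + \delta$, where $\delta \ge 0$ is the degree of the possible exceptional vertex.

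Next I would feed this lower bound into Claim \ref{c2}. By Claim \ref{c1}, pick any edge $u_iu_j \in E(G)$ with $i,j \in [q]$; we may choose $i,j$ so that neither is the exceptional vertex (possible since $n-k \ge 2$ when $n \ge 2k$ and $k \ge 2$, with the boundary case $n=2k$ handled separately or absorbed). Then
\begin{align*}
e(H^r_1 - \{u_i,u_j\}) \ge e(H^r_1) - d_{H^r_1}(u_i) - d_{H^r_1}(u_j) \ge (n-k)(k-1) - d_{H^r_1}(u_i) - d_{H^r_1}(u_j).
\end{align*}
Since each $u_i$ sends edges only into $S$ and $|S| = k-1$, we have $d_{H^r_1}(u_i), d_{H^r_1}(u_j) \le k-1$, so $e(H^r_1 - \{u_i,u_j\}) \ge (n-k)(k-1) - 2(k-1) = (n-k-2)(k-1)$. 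Comparing with the bound $(n-k-1)(k-2)+1$ from Claim \ref{c2}, a contradiction follows provided $(n-k-2)(k-1) > (n-k-1)(k-2)+1$, i.e. $(n-k) - 2(k-1) > -(k-2) + 1$, i.e. $n - k > k$, i.e. $n > 2k$ — and the non-perfect-matching case is exactly $n \ge 2k+1$. So the inequality is strict and we are done.

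The main obstacle I anticipate is getting the counting tight enough: the naive bound $d_{H^r_1}(u_i) \le k-1$ for the two removed vertices may be too lossy if both removed vertices happen to have degree exactly $k-1$ while the total $e(H^r_1)$ is only barely above $(n-k)(k-1)$. I would want to double-check whether one instead needs to choose the edge $u_iu_j$ more carefully — e.g. among all edges of $G$ inside $\{u_1,\dots,u_q\}$, pick one whose endpoints have small $G^r$-degree, or argue that if every $u_i$ with an incident $G$-edge has degree exactly $k-1$ then $e(H^r_1) = (n-k)(k-1)$ forces $e(G^r) = (n-k)(k-1) + e(G^r[S]) \le (n-k)(k-1) + \binom{k-1}{2}$, which contradicts the lower bound $e(G^r) \ge f_3(n,k) = \binom{k-2}{2} + (n-k+2)(k-2) + 2$ from \eqref{eq1} after checking $\binom{k-2}{2} + (n-k+2)(k-2) + 2 > \binom{k-1}{2} + (n-k)(k-1)$, which simplifies to $2(k-2) + 2 > (k-1) + (n-k)$... this needs $n < 2k - ...$, so it fails — meaning the degree-counting route via $e(G^r)$ directly is the cleaner one and I should lead with that: from \eqref{eq3} and $d_1 = 1$ we get $e(G^r) \le \binom{k-1}{2} + (n-k+1)(k-1)$, and if all but one $u_i$ had degree $\ge k-1$ then in fact $e(G^r) \ge \tfrac12\big((n-k)(k-1) + \text{(edges within }S)\big)$; reconciling these two is where the real work lies, and I would organize the proof around pinning down $e_{G^r}(S, V(G^r)\setminus S)$ precisely rather than bounding it crudely.
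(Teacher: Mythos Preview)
Your overall strategy---assume at most one $u_i$ has small degree, then exhibit an edge $u_iu_j\in E(G)$ that contradicts Claim~\ref{c2}---is the same as the paper's, but two steps break down.

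First, the assertion that ``we may choose $i,j$ so that neither is the exceptional vertex'' is not justified by Claim~\ref{c1}. Claim~\ref{c1} only says that each $u_i$ has \emph{some} $G$-neighbour in $D$; it is perfectly consistent for $G[D]$ to be a star centred at the exceptional vertex $u_1$, in which case every edge of $G[D]$ meets $u_1$.

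Second, even granting that choice, your arithmetic slips: the inequality $(n-k-2)(k-1)>(n-k-1)(k-2)+1$ is equivalent to $n-k>k+1$, i.e.\ $n>2k+1$, not $n>2k$ as you wrote. So your argument leaves the case $n=2k+1$ open, where the two sides are exactly equal.

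The paper's fix is the opposite of what you attempted: rather than avoiding the exceptional vertex, it \emph{deliberately} removes it. Apply Claim~\ref{c1} to $u_1$ itself to obtain some $u_l$ with $u_1u_l\in E(G)$. After deleting $\{u_1,u_l\}$, the remaining $n-k-1$ vertices $u_i$ all have $d_{G^r}(u_i)=k-1$ (each is complete to $S$), so
\[
e(H^r_1-\{u_1,u_l\})=(n-k-1)(k-1)>(n-k-1)(k-2)+1,
\]
the last inequality holding whenever $n-k-1>1$, which is automatic for $n\ge 2k+1$ and $k\ge 2$. Your rambling final paragraph circles around the right inequality but never lands on this simple move; the lesson is that the exceptional vertex is an asset to be removed, not an obstacle to be avoided.
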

\begin{proof}

Suppose to the contrary that at most one vertex $u_i$ satisfies $d_{G^{r}}(u_i)\leq k-2$. Without loss of generality, assume   $d_{G^{r}}(u_i)= k-1$ for all $i= 2,\ldots,q$. By Claim \ref{c1}, there exists $l\in \{2,\ldots,q\}$ such that $u_1u_l\in E(G)$. Then 
\begin{align*}
	e(H^{r}_{1}-\{u_1,u_l\})=(n-k-1)(k-1)>(n-k-1)(k-2)+1,
\end{align*}
contradicting Claim \ref{c2}. This completes the proof of Claim \ref{c3}.
\end{proof}

By Claim~\ref{c3}, without loss of generality, we may assume $d_{G^r}(u_1)\leq k-2$ and $d_{G^r}(u_2)\leq k-2$. 
First, if $u_1u_2\in E(G)$, then from (\ref{eq1}),  we have
\begin{align*}
	e(G^r)
	&= d_{G^r}(u_1)+d_{G^r}(u_2)+e(G^r[S])+e(H^{r}_{1}-\{u_1,u_2\})\\
	&\leq 2(k-2)+\binom{k-1}{2}+(n-k-1)(k-2)+1\quad(\mbox{by Claim \ref{c2}})\\
	&=\binom{k-2}{2}+(n-k+2)(k-2)+1,
\end{align*}
which contradicts the lower bound $e(G^r)\ge f_3(n,k) $. 

So we may assume $u_1u_2\notin E(G)$, and then $e(G)\leq \binom{n}{2}-1$. By Claim \ref{c1}, there exists $i\in\{3,\dots,q\}$ such that $u_1u_i\in E(G)$. Thus, 
\begin{align*}
	e(G^r)&= d_{G^r}(u_1)+d_{G^r}(u_i)+e(G^r[S])+e(H^{r}_{1}-\{u_1,u_i\})\\
	&\leq (k-2)+(k-1)+\binom{k-1}{2}+(n-k-1)(k-2)+1\quad(\mbox{by Claim \ref{c2}})\\
	&=\binom{k-2}{2}+(n-k+2)(k-2)+2,
\end{align*}
contradicting the fact that $e(G^r)\ge f_3(n,k)+1$.

\vspace{2mm}\textbf{Case 2.}~$s=0$.
\vspace{2mm}

\begin{claim}\label{c7}
 $d_2\leq 3$ and $d_3=1$.
\end{claim}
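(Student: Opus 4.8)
The plan is to argue by contradiction. Since every component $D_i$ is factor-critical, every $d_i$ is odd, so negating the claim leaves exactly two possibilities: either $d_3\ge 3$, or $d_3=1$ and $d_2\ge 5$. In both, the starting point is the crude estimate obtained from (\ref{eq3}) with $s=0$ and repeated applications of (\ref{fact}): merging all but the largest two or three components and using $\sum_i(d_i-1)=2k-2$, convexity of $\binom{\cdot}{2}$ gives $e(G^r)\le\sum_i\binom{d_i}{2}\le\binom{2k-5}{2}+6$ when $d_3\ge 3$ (extremal split $d=(2k-5,3,3,1,\dots,1)$), and, using also $d_1+d_2=2k$ in this regime, $e(G^r)\le\binom{2k-5}{2}+10$ when $d_3=1,\ d_2\ge 5$ (extremal split $d=(2k-5,5,1,\dots,1)$). (Note $d_2\ge 5$ or $d_3\ge 3$ forces $\sum_i(d_i-1)\ge 6$, hence $k\ge 4$.)

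Next I would dispose of the case in which $G$ has no edge joining two distinct components. Then $E(G)=\bigcup_i E(G[V(D_i)])$, so $e(G)=\sum_i e(G[V(D_i)])\le\sum_i\binom{d_i}{2}$ is bounded by the same quantity, and therefore $e(G)+c(G)=e(G)+e(G^r)\le 2\binom{2k-5}{2}+12$ (resp. $2\binom{2k-5}{2}+20$). A direct computation, using $\ex(n,(k-1)K_2)\ge\binom{2k-3}{2}$ from Theorem~\ref{Erd2}, shows this already contradicts the hypothesis $e(G)+c(G)\ge\binom{n}{2}+2+\ex(n,(k-1)K_2)$ at $n=2k+1$ for all $k\ge 4$, and the right-hand side only grows with $n$, so the contradiction persists for every $n\ge 2k+1$. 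Hence from now on $G$ contains an edge $xy$ with $x\in D_i$, $y\in D_j$, $i\ne j$.

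The workhorse is then the deletion/extension move used repeatedly in the paper. Pass to $H:=G^r-\{x,y\}-EC(xy)$. Because $G^r$ has no edge between components, $d_{G^r}(x)\le d_i-1$ and $d_{G^r}(y)\le d_j-1$, and $EC(xy)$ meets $E(G^r)$ in exactly one edge, so $e(H)\ge e(G^r)-(d_i-1)-(d_j-1)-1$. If I can force $e(H)>\ex(n-2,(k-1)K_2)$, then $H$ contains a rainbow $(k-1)K_2$ avoiding the colour $C(xy)$ and missing $x,y$, and adjoining $xy$ produces a rainbow $kK_2$ — the desired contradiction. To establish $e(H)>\ex(n-2,(k-1)K_2)$ I would play the two lower bounds on $e(G^r)$ against each other: for $n$ close to $2k$ the bound $e(G^r)\ge f_1(n,k)=\binom{2k-2}{2}-\binom{n-2k+2}{2}+2$ is so large that it already exceeds the upper bound $\binom{2k-5}{2}+10$ (so in that range no edge $xy$ is even needed); and once $n$ is large enough that $\ex(\cdot,(k-1)K_2)$ is in its linear regime for both $n$ and $n-2$, we have $\ex(n,(k-1)K_2)-\ex(n-2,(k-1)K_2)=2k-4$, so combined with $e(G^r)\ge\ex(n,(k-1)K_2)+2$ (which is $\ge\max\{f_2,f_3\}$ from (\ref{eq1})) the requirement reduces to $(d_i-1)+(d_j-1)\le 2k-4$. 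In the subcase $d_3\ge 3$ this holds, since $D_1,D_2,D_3$ supply a third nontrivial component that uses up at least $2$ of the $2k-2$.

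The step I expect to be the main obstacle is the bookkeeping that glues these ranges together: controlling the transition window for $\ex(n,(k-1)K_2)$ (its linear form versus $\binom{2k-3}{2}$) and the comparison of $f_1$ with $\ex(n,(k-1)K_2)+2$, so that every $n\ge 2k+1$ is covered by one of the three mechanisms — pure counting against $f_1$; the counting-against-the-hypothesis bound in the ``component-disconnected'' case; and the deletion/extension move. The subcase $d_3=1,\ d_2\ge 5$ also needs extra care in the extension step, because there is no spare nontrivial component and $(d_i-1)+(d_j-1)$ can equal $2k-2$; there I would exploit $d_1+d_2=2k$ together with the $f_1$-bound to absorb the deficit, falling back on the component-disconnected contradiction for the remaining values of $n$. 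Finally, the small values of $k$ (and the boundary cases where the two forms of $\ex(n,(k-1)K_2)$ differ) would be settled by direct computation, using the hypothesis $n\ge 12$.
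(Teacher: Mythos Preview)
Your opening paragraph is exactly right and matches the paper: from $s=0$, (\ref{eq3}), (\ref{fact}), and $\sum_i(d_i-1)=2(k-1)$ you get $e(G^r)\le\binom{2k-5}{2}+6$ when $d_3\ge 3$ and $e(G^r)\le\binom{2k-5}{2}+10$ when $d_2\ge 5$, $d_3=1$. But everything after that is unnecessary, and you have overlooked a one-line finish. Inequality~(\ref{eq1}) already gives
\[
e(G^r)\ \ge\ f_2(n,k)\ =\ \binom{2k-3}{2}+2
\]
for \emph{every} $n\ge 2k+1$; the function $f_2$ does not depend on $n$. Since $\binom{2k-3}{2}-\binom{2k-5}{2}=4k-9$, one has $\binom{2k-3}{2}+2>\binom{2k-5}{2}+10$ as soon as $k\ge 5$ (forced by $d_2\ge 5$), and $\binom{2k-3}{2}+2>\binom{2k-5}{2}+6$ as soon as $k\ge 4$ (forced by $d_3\ge 3$). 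This is the paper's entire proof of the claim.

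Your detour through the cases ``$G$ has no inter-component edge'' and the deletion/extension move is therefore superfluous here, and by your own description it is also incomplete: the ``bookkeeping that glues these ranges together'' is only sketched, and in the subcase $d_3=1$, $d_2\ge 5$ you note that $(d_i-1)+(d_j-1)$ can hit $2k-2$, so the extension inequality $e(H)>\ex(n-2,(k-1)K_2)$ may fail, and you fall back on a hoped-for absorption via $f_1$ without carrying it out. The root cause is that you reached for $f_1$ (which decays in $n$) and the linear form of $\ex(n,(k-1)K_2)$, when the constant bound $f_2$ is exactly what is needed. Keep your first paragraph, compare directly with $f_2$, and you are done.
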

\begin{proof}
By contradiction. Suppose that  $d_2\geq 5$ or $d_3\ge 3$. We first consider $d_2\ge5$. Then by (\ref{eq2}), $k\ge (d_1+d_2)/2\ge 5$. By (\ref{eq3}), we have
\begin{align*}
e(G^r)\leq\sum_{i=1}^{q}\binom{d_i}{2}&\leq \binom{d_1+(d_2-5)+\sum_{i=3}^{q}(d_i-1)}{2}+\binom{5}{2}\quad(\mbox{by (\ref{fact})})\\
&\leq\binom{2k-5}{2}+10\quad(\mbox{by (\ref{eq2})})\\
&< \binom{2k-3}{2}+2\quad(\mbox{since $k\ge 5$}),
%&\leq \max\{f_1(n,k),f_2(n,k),f_3(n,k)\}\leq e(G^r),~~(by~(\ref{eq1}))
\end{align*}
contradicting inequality (\ref{eq1}). Hence, $d_2\leq 3$. 

Next, we consider $d_3\geq 3$. Then (\ref{eq2}) implies $k\ge 4$. By (\ref{eq3}),  we have
\begin{align*}
e(G^r)\leq\sum_{i=1}^{q}\binom{d_i}{2}&\leq \binom{d_1+(d_2-3)+(d_3-3)+\sum_{i=4}^{q}(d_i-1)}{2}+2\binom{3}{2}\quad(\mbox{by (\ref{fact})})\\
&\leq\binom{2k-5}{2}+6\quad(\mbox{by (\ref{eq2})})\\
&< \binom{2k-3}{2}+2\quad(\mbox{since $k\ge 4$}),
%&\leq \max\{f_1(n,k),f_2(n,k),f_3(n,k)\}\leq e(G^r),~~(by~(\ref{eq1}))
\end{align*}
again contradicting (\ref{eq1}). Therefore, $d_3=1$.
\end{proof}

We now consider two subcases according to the value of $d_2$.

\vspace{2mm}\textbf{Subcase 2.1.}~$d_2=3$.
\vspace{2mm}

By Claim~\ref{c7}, $d_3=1$, and by (\ref{eq2}) we have $d_1=2k-3\geq 3$. Then by (\ref{eq1}),  we have $\binom{2k-3}{2}+2\leq e(G^r)\leq \binom{2k-3}{2}+3$.
So
\begin{align*}
G^r\in \{K_{2k-3}\cup K_3\cup \overline{K_{n-2k}},K_{2k-3}^{-}\cup K_3\cup \overline{K_{n-2k}}\},
\end{align*}
and consequently,
\begin{align*}
	G\in \{K_{n},K_{n}^{-}\}.
\end{align*}
By Lemmas \ref{lemma2} and \ref{lemma1}, $G$ contains a rainbow matching of size $k$, a contradiction.

\vspace{2mm}\textbf{Subcase 2.2.}~$d_2=1$.
\vspace{2mm}

By (\ref{eq2}), we have $d_1=2k-1$. Then by (\ref{eq2'}), $q=n-2k+2$. Let $D=\bigcup_{i=2}^{q}\{u_i\}$, where $V(D_i)=\{u_i\}$ for $i=2,\dots,q$. 

We first show $E(G[D])= \emptyset$.  Suppose, for contradiction, that $E(G[D])\neq \emptyset$, and let $u_iu_j\in E(G)$ for some $i,j\in \{2,\ldots,q\}$. Choose an edge $xy\in E(G^r)$ with color $C(u_iu_j)$. Since  $D_1$ is factor-critical and has order $2k-1$, the subgraph $D_1-\{x\}$ has a perfect matching $ M_4$ of size $k-1$ that avoids the color $C(u_iu_j)$. Then $M_4\cup \{u_iu_j\}$ forms a rainbow matching of size $k$ in $G$, a contradiction. Therefore,  $E(G[D])= \emptyset$.

\begin{claim}\label{c10} $E_{G}(D,V(D_1))\neq \emptyset$.
\end{claim}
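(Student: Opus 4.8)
The plan is to argue by contradiction: suppose $E_G(D, V(D_1)) = \emptyset$. Combined with the already-established fact that $E(G[D]) = \emptyset$, this means that in $G$ the vertex set $D$ is isolated from everything except itself, and since $G[D]$ is empty too, the vertices of $D$ are isolated in $G$ altogether. So all edges of $G$ lie inside $V(D_1)$, which has $2k-1$ vertices, giving $e(G) \le \binom{2k-1}{2}$. Since $d_1 = 2k-1$ and (by Subcase 2.2) $D_1$ is the only nontrivial component, we also have $e(G^r) = e(D_1) \le \binom{2k-1}{2}$.

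Next I would plug these bounds into the governing inequality (\ref{eq1}), i.e. $e(G) + e(G^r) = e(G) + c(G) \ge \binom{n}{2} + \max\{f_1, f_2, f_3\} \ge \binom{n}{2} + f_2(n,k) = \binom{n}{2} + \binom{2k-3}{2} + 2$. This yields
\begin{align*}
2\binom{2k-1}{2} \ge e(G) + e(G^r) \ge \binom{n}{2} + \binom{2k-3}{2} + 2 \ge \binom{2k}{2} + \binom{2k-3}{2} + 2,
\end{align*}
using $n \ge 2k$ in the last step. The plan is then to show this is false for all $k \ge 2$ by a direct computation: $2\binom{2k-1}{2} = (2k-1)(2k-2)$, while $\binom{2k}{2} + \binom{2k-3}{2} + 2 = k(2k-1) + (2k-3)(k-2) + 2$, and expanding both sides gives a contradiction (the right side exceeds the left by a positive quadratic in $k$). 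Actually, I should double-check the edge case $n = 2k$ isn't already handled in the perfect-matching section — it is, so here $n \ge 2k+1$, making the gap even larger, but I would state the argument so it works from $n \ge 2k$ anyway for cleanliness.

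I don't anticipate a genuine obstacle here; the only mild care needed is to make sure I am using the correct lower bound on $e(G) + c(G)$ and not accidentally invoking $f_1$ or $f_3$ when $f_2$ is the relevant one for this regime — but since we only need \emph{some} lower bound and $\max\{f_1,f_2,f_3\} \ge f_2$ always holds, picking $f_2$ is safe. The one place to be slightly careful is the claim $E(G[D]) = \emptyset \Rightarrow D$ consists of isolated vertices in $G$: this needs both the assumption $E_G(D,V(D_1)) = \emptyset$ (negation of the claim) and the earlier-proved $E(G[D]) = \emptyset$, together with the fact that $V(G) = V(D_1) \cup D$ since $s = 0$ and these are all the components. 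I would spell that out in one line. The final numerical inequality should be presented compactly, perhaps just stating the expanded forms of both sides and noting the difference is $2(k-1)(k-2) + \text{(something positive)} > 0$, rather than grinding through it.
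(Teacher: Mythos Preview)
Your approach is essentially the paper's: under the negation, $e(G)\le\binom{2k-1}{2}$ and $e(G^r)=e(D_1)\le\binom{2k-1}{2}$, so $e(G)+e(G^r)\le 2\binom{2k-1}{2}$, and one compares this with the hypothesis. The paper phrases it as a direct lower bound on $e_G(D,V(D_1))$ and invokes $f_1$ rather than $f_2$, but the structure is identical.

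There is, however, a genuine numerical slip in your displayed chain. With $n=2k$ one has
\[
2\binom{2k-1}{2}=4k^2-6k+2,\qquad \binom{2k}{2}+\binom{2k-3}{2}+2=4k^2-8k+8,
\]
so the difference is $2k-6$, not a positive quantity; for $k\ge 3$ the left side is at least as large as the right, and no contradiction arises. Your remark that you could ``state the argument so it works from $n\ge 2k$ anyway for cleanliness'' is therefore incorrect, as is the claim that the gap is ``a positive quadratic in $k$''. You must use $n\ge 2k+1$ (which, as you observed, is exactly the setting of this subsection): then
\[
\binom{2k+1}{2}+\binom{2k-3}{2}+2=4k^2-6k+8>4k^2-6k+2=2\binom{2k-1}{2},
\]
and the contradiction goes through for all $k\ge 2$. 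So just replace $\binom{2k}{2}$ by $\binom{2k+1}{2}$ in the final step and drop the aside about $n\ge 2k$.
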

\begin{proof}

Since $e(G^r)=e(D_1)\le \binom{2k-1}{2}$ and $e(G[D])=0$,  we have
\begin{align*}
	e(G)+e(G^r)\le 2\binom{2k-1}{2}+E_{G}(D,V(D_1)).
\end{align*}
By the theorem hypothesis, $e(G)+e(G^r)\ge \binom{n}{2}+f_1(n,k)$. Comparing these bounds yields
 $e_{G}(D,V(D_1))>0$, so $E_{G}(D,V(D_1))\neq \emptyset$.
\end{proof}
\begin{claim} \label{c11}
$\delta(G^r-D)\leq k-1$.
\end{claim}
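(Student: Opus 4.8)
The plan is to argue by contradiction: suppose $\delta(G^r - D) \geq k$. Since $G^r - D = D_1$ on $2k-1$ vertices, this means every vertex of $D_1$ has degree at least $k$ inside $D_1$. The immediate consequence is a lower bound on $e(G^r) = e(D_1) \geq \frac{1}{2}(2k-1)k$. I would compare this against the upper bound $e(G^r) \leq \binom{2k-1}{2} = \frac{1}{2}(2k-1)(2k-2)$, which is consistent for $k \geq 2$, so a pure edge-count alone does not immediately yield a contradiction; the real leverage must come from combining the minimum-degree hypothesis with Claim \ref{c10} (there is an edge from $D$ to $V(D_1)$) and the global condition $e(G) + c(G) \geq \binom{n}{2} + f_1(n,k)$.

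\textbf{Key steps.}
First I would take the edge $u_i w \in E_G(D, V(D_1))$ guaranteed by Claim \ref{c10}, with $u_i \in D$ and $w \in V(D_1)$. Next, locate the edge $xy$ of $G^r$ carrying the color $C(u_i w)$; this edge lies in $E(D_1)$ (it cannot touch $D$ since $E(G[D]) = \emptyset$ and there are no $G^r$-edges between components other than via $S = \emptyset$). Now the strategy mirrors the earlier case analysis: delete $w$ together with the color class $EC(u_i w)$ from $G^r$, obtaining a graph $H$ on $2k-2$ vertices. Using $\delta(D_1) \geq k$, every vertex of $D_1 - \{w\}$ still has degree at least $k - 1 \geq (2k-2)/2$ in $D_1 - \{w\}$ after removing $w$; removing the single color class $EC(u_i w)$ destroys at most... here one must be careful, but a Dirac-type argument (Theorem \ref{Dir}) applied to $D_1 - \{w\}$ minus at most one further edge should still yield a Hamilton cycle, hence a perfect matching $M$ of size $k-1$ in $H$ avoiding $C(u_i w)$. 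Then $M \cup \{u_i w\}$ is a rainbow matching of size $k$ in $G$, the desired contradiction.

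\textbf{Main obstacle.}
The delicate point is controlling how many edges of $D_1 - \{w\}$ are killed when we remove the color class $EC(u_i w)$: a single color class in $G^r$ contributes only one edge of $G^r$, so in fact $EC(u_i w) \cap E(G^r)$ is exactly the one edge $xy$. Thus $D_1 - \{w\} - EC(u_i w)$ loses at most one edge from $D_1 - \{w\}$, and since $\delta(D_1) \geq k$ forces $\delta(D_1 - \{w\}) \geq k - 1 = \lceil (2k-2)/2 \rceil$ when... actually $k - 1 \geq (2k-2)/2$ with equality, so after deleting one more edge the minimum degree could drop to $k-2 < (2k-2)/2$ at the two endpoints of $xy$. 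To handle this I would either (a) use the stronger count $e(D_1) \geq \frac{1}{2}(2k-1)k$ to derive a direct numerical contradiction with $e(G^r) \leq \binom{2k-1}{2} + (\text{something})$ obtained by feeding the degree bound back into the global inequality with Claim \ref{c10}'s edge counted on the $G$ side, or (b) observe that if the Dirac condition marginally fails we can instead invoke Lemma \ref{lemma1} / the $K_{2k-1}^-$ structure directly. I expect option (a) — squeezing a contradiction out of $e(G) + c(G) \geq \binom{n}{2} + f_1(n,k)$ together with $e(G^r) = e(D_1) \geq \frac{(2k-1)k}{2}$ and $e(G) \leq \binom{n}{2} - (\text{non-edges forced by } E(G[D]) = \emptyset)$ — to be the cleanest route, and pinning down exactly which non-edges are available will be the crux of the argument.
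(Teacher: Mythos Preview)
Your setup is exactly right: assume $\delta(G^r-D)\ge k$, take the edge $u_iw\in E_G(D,V(D_1))$ from Claim~\ref{c10}, and try to find a $(k-1)$-matching in $D_1-\{w\}$ that avoids the colour $C(u_iw)$. You also correctly identify the obstacle in your approach: after deleting $w$ the graph $D_1-\{w\}$ has minimum degree at least $k-1=|V(D_1-\{w\})|/2$, so Dirac's theorem applies, but once you additionally delete the single $G^r$-edge carrying the colour $C(u_iw)$ the minimum degree may drop to $k-2$ and Dirac no longer fires.

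The gap is that neither of your proposed workarounds actually closes the argument. Option~(a) cannot work as stated: the hypothesis $\delta(D_1)\ge k$ gives a \emph{lower} bound $e(G^r)\ge \tfrac{1}{2}(2k-1)k$, and the global assumption together with $E(G[D])=\emptyset$ also gives a \emph{lower} bound on $e(G^r)$; two lower bounds do not contradict each other, and there is no competing upper bound available in this subcase. Option~(b) is gestured at but not made precise.

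The clean fix---and what the paper does---is to avoid deleting the coloured edge altogether. Apply Dirac's theorem directly to $G':=D_1-\{w\}$ (which has $2k-2$ vertices and $\delta(G')\ge k-1$) to obtain a Hamilton cycle. Since $2k-2$ is even, this Hamilton cycle decomposes into \emph{two} edge-disjoint perfect matchings $M_5,M_6$ of size $k-1$. Because $G^r$ is rainbow, edge-disjoint implies colour-disjoint, so at most one of $M_5,M_6$ can contain the colour $C(u_iw)$; the other, together with $u_iw$, is the desired rainbow $k$-matching. This ``two matchings from one Hamilton cycle'' trick is the missing idea.
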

\begin{proof}
Suppose, for contradiction, that 
 $\delta(G^r-D)\geq k$. By Claim \ref{c10}, there exists an edge  $u_iw\in E(G)$ with $w\in V(D_1)$ and $2\leq i\leq q$. Let $G'=G^r-(D\cup \{w\})$. Then $|V(G')|=2k-2$ and  $\delta(G')\geq k-1=|V(G')|/2$. By Theorem \ref{Dir}, $G'$ contains a Hamilton cycle, which implies the existence of two edge-disjoint perfect matchings $M_5$ and $M_6$, each of size $k-1$,with disjoint color sets. At least one of them, say $M_5$, avoids color $C(u_iw)$. Then $M_5\cup \{u_iw\}$ is a rainbow matching of size $k$ in $G$, a contradiction.
\end{proof}

\begin{claim}\label{c12}  For any vertex $w\in V(D_1)$, if there exists $i\in\{2,\dots,q\}$ such that $wu_i\in E(G)$, 
then $$
e(G^r-\{w\})\leq \binom{2k-3}{2}+1.$$
\end{claim}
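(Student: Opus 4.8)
The plan is to argue by contradiction: suppose $e(G^r-\{w\})\geq \binom{2k-3}{2}+2$, and then use the edge $wu_i\in E(G)$ together with a rainbow matching extracted from $G^r-\{w\}$ to build a rainbow matching of size $k$ in $G$. First I would observe that the color class $EC(wu_i)$ may or may not meet $E(G^r-\{w\})$; to be safe, pass to the graph $H:=G^r-\{w\}-EC(wu_i)$, which has at least $e(G^r-\{w\})-1\geq \binom{2k-3}{2}+1$ edges. The key point is to compare this with $\ex(2k-3,(k-1)K_2)$ on the relevant vertex set. Since $V(G^r-\{w\})$ has $n-1$ vertices but the ``useful'' structure lives inside $V(D_1)\setminus\{w\}$ which has $2k-2$ vertices, I would want to localize: all edges of $G^r$ other than those incident to $w$ lie in $D_1$ (because $V(D_2)=\cdots=V(D_q)$ are singletons with no $G^r$-edges among them and $s=0$), so $e(G^r-\{w\})=e(D_1-\{w\})$ and $D_1-\{w\}$ has exactly $2k-2$ vertices.

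So the real estimate is: $e(D_1-\{w\}-EC(wu_i))\geq \binom{2k-3}{2}+1 > \binom{2k-3}{2} = \ex(2k-2,(k-1)K_2)$ by Theorem~\ref{Erd2} (checking that $\binom{2k-3}{2}$ is indeed the larger of the two terms in the Erd\H{o}s--Gallai formula for $a=2k-2$; this holds since $\binom{k-2}{2}+(k-2)(k-1)=\binom{k-2}{2}+(k-2)(k-1)$ versus $\binom{2k-3}{2}$, and for the range $n\geq 12$, $k$ not too small the clique term dominates — this needs a one-line check, possibly splitting off small $k$). Hence $D_1-\{w\}-EC(wu_i)$ contains a rainbow matching $M$ of size $k-1$, and since $M$ avoids the color $C(wu_i)$ and is vertex-disjoint from $\{w,u_i\}$ (as $u_i\notin V(D_1)$), the set $M\cup\{wu_i\}$ is a rainbow matching of size $k$ in $G$, contradicting the standing assumption that $G^r$ has no rainbow matching of size $k$. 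This gives $e(G^r-\{w\})\leq \binom{2k-3}{2}+1$, as claimed.

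The main obstacle I anticipate is the arithmetic comparison $\binom{2k-3}{2}+1 > \ex(2k-2,(k-1)K_2)$, i.e. confirming which branch of the Erd\H{o}s--Gallai maximum is active and that the strict inequality survives. When $k$ is small the non-clique branch $\binom{k-2}{2}+(k-2)(n-k+1)$ evaluated at $n=2k-2$, namely $\binom{k-2}{2}+(k-2)(k-1)$, could in principle compete with $\binom{2k-3}{2}$; a direct subtraction shows $\binom{2k-3}{2}-\binom{k-2}{2}-(k-2)(k-1) = \tfrac{1}{2}(k-2)(k-3)\geq 0$ with equality only at $k=2,3$, so the clique branch is always at least as large and the bound $\binom{2k-3}{2}+1>\binom{2k-3}{2}\geq\ex(2k-2,(k-1)K_2)$ is safe for all $k\geq 2$. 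The only remaining care is the degenerate values $k=2$ (where $k-1=1$ and the Erd\H{o}s--Gallai formula must be read correctly) and ensuring $2k-2\geq 2(k-1)$ so the formula applies; both are immediate. Everything else — the localization of $G^r$-edges to $D_1$, the disjointness of $M$ from $\{w,u_i\}$, the color-avoidance — is structural and follows directly from the setup of Subcase 2.2.
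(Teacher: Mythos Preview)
Your proposal is correct and follows essentially the same route as the paper: assume $e(G^r-\{w\})\geq \binom{2k-3}{2}+2$, delete one edge of color $C(wu_i)$, invoke $\ex(2k-2,(k-1)K_2)=\binom{2k-3}{2}$ to extract a rainbow $(k-1)$-matching in $D_1-\{w\}$ avoiding that color, and adjoin $wu_i$. Your explicit localization ($e(G^r-\{w\})=e(D_1-\{w\})$ since $s=0$ and all $D_i$ with $i\geq 2$ are singletons) and your check of which Erd\H{o}s--Gallai branch is active are details the paper leaves implicit; note only that in your intermediate expression the non-clique branch at $n'=2k-2$ should read $\binom{k-2}{2}+(k-2)\cdot k$ rather than $\binom{k-2}{2}+(k-2)(k-1)$, though your stated difference $\tfrac{1}{2}(k-2)(k-3)$ is in fact the correct one.
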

\begin{proof}

By contradiction. Suppose that there exists a vertex $w\in V(D_1)$ and an index $i\in\{2,\dots,q\}$ with $wu_i\in E(G)$, such that $e(G^r-\{w\})> \binom{2k-3}{2}+1$. Then
\begin{align*}
e(G^r-\{w\}-EC(u_iw))> \binom{2k-3}{2}=ex(2k-2,(k-1)K_2).
\end{align*}
This implies that there exists a rainbow matching $M_7$ of size $k-1$ in $G^r-\{w\}-EC(u_iw)$  that avoids color $C(u_iw)$. Thus, $M_7\cup \{u_iw\}$ forms a rainbow matching of size $k$ in $G$, a contradiction.
\end{proof}

Now we derive the final contradiction by calculating the sum of $e(G)$ and $e(G^r)$. By Claim \ref{c11}, there exists a vertex $w\in V(D_1)$ with $d_{G^r}(w)\leq k-1$. 

We first show that $u_iw\notin E(G)$ for all $i\in\{2,\dots,q\}$. Otherwise, suppose  that $u_iw\in E(G)$ for some $i\in\{2,\dots,q\}$. Then by Claim~\ref{c12}, we have
\begin{align*}
	e(G^r)+e(G)
	&= e(G^r-\{w\})+d_{G^r}(w)+e(G)\\
	&\leq \binom{2k-3}{2}+1+(k-1)+\binom{n}{2}-\binom{n-2k+1}{2}\\
	&<\max\{f_1(n,k),f_3(n,k)\}+\binom{n}{2},
	%&=2kn-n-5k+6<2kn-2n-2k+4,
\end{align*}
contradicting (\ref{eq1}). Hence, $u_iw\notin E(G)$ for all $i=2,\dots,q $.

 Since $d_{G_r}(w)\leq k-1$ and $|V(G^r)\setminus (D\cup \{w\})|=2k-2\geq k$, there exists  $w'\in V(D_1)\setminus\{w\}$ such that $ww'\notin E(G^r)$, which implies $d_{G^r}(w')\leq 2k-3$. 

We now show that   $w'u_i\notin E(G)$ for all $i\in\{2,\dots,q\}$. If $w'u_i\in E(G)$ for some $2\le i\le q$,   then by Claim~\ref{c12} and the fact that $u_iw\notin E(G)$ for all $i\in\{2,\dots,q\}$, we have
\begin{align*}
e(G^r)+e(G)
&\leq e(G^r-\{w'\})+d_{G^r}(w')+e(G)\\
&\leq \binom{2k-3}{2}+1+(2k-3)+\binom{2k-1}{2}+(n-2k+1)(2k-2)\\
&<\binom{2k-2}{2}-\binom{n-2k+2}{2}+2+\binom{n}{2},
%&=2kn-2n-2k+3,
\end{align*}
 contradicting (\ref{eq1}). Therefore, $w'u_i\notin E(G)$ for all $i\in\{2,\dots,q\}$.

 By Claim \ref{c10}, there exist $w''\in V(D_1)$ and $u_j\in D$ with $w''u_j\in E(G)$. Then by Claim~\ref{c12} and the fact that  $u_iw, u_iw'\notin E(G)$  for all $i\in\{2,\dots,q\}$, we have
 \begin{align*}
e(G^r)+e(G)
&\leq e(G^r-\{w''\})+d_{G^r}(w'')+e(G)\\
&\leq \binom{2k-3}{2}+1+(2k-2)+\binom{2k-1}{2}+(n-2k+1)(2k-3)\\
&<\binom{2k-2}{2}-\binom{n-2k+2}{2}+2+\binom{n}{2},
\end{align*}
 contradicting (\ref{eq1}).

\vspace{2mm}
\textbf{Case 3.}~$1\leq s\leq k-2$.
\vspace{2mm}

\begin{claim}\label{c14}  
 $d_2=1$.
\end{claim}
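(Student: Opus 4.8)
The plan is to argue by contradiction, assuming $d_2 \ge 3$, and to push the resulting structural constraints through the edge-count inequality \eqref{eq3} together with the lower bound \eqref{eq1}, exactly in the spirit of Claims \ref{c4}, \ref{c5}, and \ref{c7}. The starting point is \eqref{eq3}, which bounds $e(G^r)$ by $\binom{s}{2} + (n-s)s + \sum_i \binom{d_i}{2}$. The key move is to apply the compression inequality \eqref{fact} to merge all the $\binom{d_i}{2}$ terms: because we assume $d_2 \ge 3$, we may peel off a $\binom{3}{2}$ from $D_2$ and collapse $d_1$ together with $(d_2-3)$ and $\sum_{i\ge3}(d_i-1)$ into a single binomial coefficient. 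Using \eqref{eq2} in the form $\sum_i (d_i-1) = 2(k-1-s)$, this consolidates to
\begin{align*}
e(G^r) \le \binom{s}{2} + (n-s)s + \binom{2k-3-2s}{2} + 3.
\end{align*}
This is the analogue of \eqref{eq5} but now with $s$ ranging over $\{1, \ldots, k-2\}$ rather than just $\{0,1\}$, and with general $n \ge 2k+1$ rather than $n = 2k$.

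Next I would compare this bound against the lower bound $e(G^r) \ge \max\{f_1(n,k), f_2(n,k), f_3(n,k)\}$ from \eqref{eq1}. Treating the right-hand side $g(s) := \binom{s}{2} + (n-s)s + \binom{2k-3-2s}{2} + 3$ as a function of $s$, one checks it is convex in $s$ (the $s^2$ coefficient is $\tfrac12 + 1 \cdot(-1)\cdot(\text{sign check}) + \tfrac12 \cdot 4 = \tfrac{3}{2} > 0$ after expanding, mirroring the $f(s)$ computation in Claim \ref{c4}), so its maximum over the interval $[1, k-2]$ is attained at an endpoint. Thus it suffices to check $s = 1$ and $s = k-2$ separately and show that in each case $g(s)$ is strictly smaller than the appropriate $f_j(n,k)$. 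For $s = 1$ the dominant comparison is against $f_1(n,k) = \binom{2k-2}{2} - \binom{n-2k+2}{2} + 2$ when $n$ is close to $2k$, and against $f_3(n,k)$ when $n$ is large; for $s = k-2$ the natural comparison is against $f_3(n,k) = \binom{k-2}{2} + (n-k+2)(k-2) + 2$, since at $s = k-2$ the consolidated binomial $\binom{2k-3-2s}{2} = \binom{1}{2} = 0$ collapses and $g(k-2)$ becomes roughly $\binom{k-2}{2} + (n-k+2)(k-2) + 3$ minus a positive quantity — here the hypotheses $n \ge 2k+1$ and $n \ge 12$ should give the needed strict slack.

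The main obstacle I anticipate is the endpoint $s = k-2$: there the gap between the upper bound $g(k-2)$ and $f_3(n,k)$ is genuinely narrow (both are of order $(n-k+2)(k-2)$), so the crude estimate \eqref{eq3} — which treats the join $E_{G^r}(S, V(G^r)\setminus S)$ as complete — may not by itself close the case. If that happens, the fallback is to refine \eqref{eq3} for this endpoint: when $s = k-2$ and $d_2 \ge 3$, \eqref{eq2} forces $d_1 = 3$ as well (since $\sum(d_i - 1) = 2$ with $d_1 \ge d_2 \ge 3$ impossible unless there are exactly two components of size $3$, i.e. $q = 2$, $d_1 = d_2 = 3$, $s = k-3$ — so actually $s = k-2$ with $d_2 \ge 3$ may already be arithmetically impossible, which would dispatch the endpoint for free). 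Sorting out exactly which $(s, d_1, d_2, q)$ configurations with $d_2 \ge 3$ are even consistent with \eqref{eq2} and \eqref{eq2'} is the first thing I would do, since it likely eliminates the troublesome large-$s$ regime outright and leaves only small $s$, where the slack in the inequalities is comfortable and the argument closes just as in Claims \ref{c5} and \ref{c7}.
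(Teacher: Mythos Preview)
Your approach is essentially the paper's: assume $d_2\ge 3$, compress $\sum\binom{d_i}{2}$ via \eqref{fact} to get $e(G^r)\le \binom{s}{2}+(n-s)s+\binom{2k-3-2s}{2}+3$, and contradict \eqref{eq1}. The one reorganization worth making is to promote your ``fallback'' to the opening line: since $d_1\ge d_2\ge 3$, equation \eqref{eq2} gives $s\le k-3$ immediately, so the troublesome endpoint $s=k-2$ never enters and the convex check runs only over $[1,k-3]$; the paper then compares directly against $\max\{f_2(n,k),f_3(n,k)\}$ rather than $f_1$.
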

\begin{proof}
By contradiction, suppose that $d_2\geq 3$. Since $d_1\geq d_2$, (\ref{eq2}) implies $s\leq k-3$. From (\ref{eq3}) and (\ref{eq2}), we derive 
\begin{align*}
e(G^r)&\leq\binom{s}{2}+\sum_{i=1}^{q}\binom{d_i}{2}+(n-s)s\\
&\leq \binom{s}{2}+(n-s)s+\binom{d_1+(d_2-3)+\sum_{i=3}^{q}(d_i-1)}{2}+\binom{3}{2}\\
&\leq\binom{s}{2}+(n-s)s+\binom{2k-3-2s}{2}+3.
\end{align*}
For $1\le s \le k-3$, the right-hand side is strictly less than $\max\{f_2(n,k),f_3(n,k)\}$,
contradicting (\ref{eq1}).
\end{proof}

By (\ref{eq2}) and Claim \ref{c14}, we have $d_1=2k-1-2s$. Define $D=\bigcup_{i=2}^{q}\{u_i\}$, where $V(D_i)=\{u_i\}$ for each $i\in\{2,\dots,q\}$ and $q=n-2k+s+2$.

\vspace{2mm}\textbf{Subcase 3.1.}~$E(G[D])=\emptyset$.
\vspace{2mm}

We first observe that
\begin{align}\label{eq5'}
e(G)\leq \binom{n}{2}-\binom{|D|}{2}=\binom{n}{2}-\binom{n-2k+1 +s}{2}.
\end{align}

%\begin{align}\label{eq6}
%&\binom{2k-3}{2}+2+\binom{n-2k+1 +s}{2}\notag\\
%\leq&
%f_2(n,k)+\binom{n}{2}-e(G)\notag\\
%\leq&\max\{\binom{n}{2}+\binom{2k-2}{2}-\binom{n-2k+2}{2}+2,\binom{n}{2}+ex(n,(k-1)K_2)+2\}-e(G)\notag\\
%\leq& e(G^r)\notag\\
%\leq& \binom{s}{2}+\binom{2k-1-2s}{2}+(n-s)s.
%\end{align}

%\begin{claim}\label{c15}  $s=1$.
%\end{claim}

%By contradiction. Suppose that $k-2\geq s\geq 2$.
%Let 
%\begin{align*}
%g(s,n):&=\binom{2k-3}{2}+2+\binom{n-2k+1 +s}{2}-\binom{s}{2}-\binom{2k-1-2s}{2}-(n-s)s\\
%~~&=-s^2+s(2k-2)+\frac{n(n+1-4k)}{2}+2k^2-5k+7.
%\end{align*}
%On the one hand, since $2\leq s\leq k-2$ and $n\ge 2k+1$, we have
%\begin{align}\label{eq15a}
%	g(s,n)\ge g(2,n)\ge g(2,2k+1)=0.
%\end{align}
%On the other hand, by (\ref{eq5'}), one can see that
%\begin{align}\label{eq6}
%\binom{2k-3}{2}+2+\binom{n-2k+1 +s}{2}&\leq \binom{n}{2}+f_2(n,k)-e(G)\notag\\
%&\leq\binom{n}{2}+\max\{f_1(n,k),f_2(n,k),f_3(n,k)\}-e(G)\notag\\
%&\leq e(G^r)\quad(\mbox{by (\ref{eq1})})\notag\\
%&\leq \binom{s}{2}+\binom{2k-1-2s}{2}+(n-s)s\quad(\mbox{by (\ref{eq3})}),
%\end{align}
%i.e., $g(s,n)\le 0$.
%Combining  (\ref{eq6}) and (\ref{eq15a}),  all equalities in (\ref{eq15a}) and (\ref{eq6}) hold. Substituting $n=2k+1$ into (\ref{eq6}) gives 
% \begin{align*}
%f_2(2k+1,k)=\max\{f_1(2k+1,k),f_2(2k+1,k),f_3(2k+1,k)\}\ge f_1(2k+1,k).
%\end{align*}
%However, by the computation of $f_2(2k+1,k)$ and $f_1(2k+1,k)$, we have  $f_2(2k+1,k)< f_1(2k+1,k)$  since $k> 4$, a contradiction. Hence the claim holds.

We now prove that $s=1$. Suppose, to the contrary, that $2\le s\le k-2$. Let 
\begin{align*}
g(s,n):&=-\binom{n-2k+1 +s}{2}+\binom{s}{2}+\binom{2k-1-2s}{2}+(n-s)s\\
&=-\frac{n^2}{2}+n(2k-\frac{1}{2})+s^2+s(2-2k)+(1-2k).
\end{align*}
One can see that $g(s,n)\le g(2,n)\le g(2,2k+1)$.
Then by (\ref{eq5'}) and (\ref{eq3}), we have
\begin{align*}
	e(G)+e(G^r)&\le \binom{n}{2}+g(s,n)\le  \binom{n}{2}+g(2,2k+1)<\max\{f_1(n,k),f_2(n,k)\}+\binom{n}{2},
	\end{align*}
which contradicts (\ref{eq1}). Hence, we  have $s=1$.

It follows that  $d_1=2k-3$ and $|D|=n-2k+2\geq 3$. Let $S=\{v\}$. From (\ref{eq1}), (\ref{eq3}) and (\ref{eq5'}), we obtain
\begin{align}\label{eq10}
\binom{2k-3}{2}+(n-1)\geq e(G^r)&\geq \max\{f_1(n,k),f_2(n,k)\}+\binom{n-2k+2}{2},
\end{align}
and 
\begin{align}\label{eq11'}
\binom{n}{2}-\binom{n-2k+2}{2}&\geq e(G)\geq  \binom{n}{2}-n+3.
\end{align}
Note that 
$$e_{G^r}(D,\{v\})=e(G^r)-e(G^r[V(D_1)\cup \{v\}])\geq \binom{2k-2}{2}+2-\binom{2k-2}{2}=2.$$ 
Hence, without loss of generality, we may assume that $u_2v, u_3v\in E(G^r)$.  

We claim that $E_G(\{u_i\},V(D_1))\neq \emptyset$ for every  $i\in\{2,\dots,q\}$. Indeed, we have
\begin{align*}
	e_G(D,V(D_1))&=e(G)-e(D_1)-d_G(v)\\
	&\ge \binom{n}{2}-n+3-\binom{2k-3}{2}-(n-1)\quad(\mbox{by (\ref{eq11'})})\\
	&>(n-2k+1)(2k-3),
\end{align*}
which confirms the claim.

Let $w\in V(D_1)$ be such that $wu_4\in E(G)$.  Since $D_1$ is factor-critical, the graph $D_1-\{w\}$ contains a matching $M_8$ of size $k-2$. As neither $M_8\cup \{vu_2,u_4w\}$ nor $M_8\cup \{vu_3,u_4w\}$ is rainbow,  there must exist an edge in $M_8$ sharing the color  $C(u_4w)$. 

By Theorem \ref{Erd2}, we have
\begin{align*}
e(D_1-\{w\}-EC(u_4w))\leq ex(2k-4,(k-2)K_2)=\binom{2k-5}{2}.
\end{align*}
Otherwise, $D_1-\{w\}-EC(u_4w)$ would contain a rainbow matching $M_9$  of size $k-2$, in which case either $M_9\cup \{vu_2,u_4w\}$ or $M_9\cup \{vu_3,u_4w\}$ would be a rainbow matching of size $k$, a contradiction.

Therefore, we obtain
\begin{align*}
e(G^r)&\leq e(D_1-\{w\}-EC(u_4w))+1+d_{D_1}(w)+e_{G^r}(\{v\},V(G^r)\setminus\{v\})\notag\\
&\leq \binom{2k-5}{2}+1+(2k-4)+(n-1)\notag\\
&=\binom{2k-3}{2} +(n-1)-(2k-6).
\end{align*}
For $n\ge 2k+1$ and $n\ge 12$, this contradicts the lower bound in (\ref{eq10}).

\vspace{2mm}\textbf{Subcase 3.2.}~$E(G[D])\neq\emptyset$.
\vspace{2mm}

Fix vertices $u_2,u_3\in D$ with $u_2u_3\in E(G)$. Define 
 $H^{r}_{2}$ as the bipartite graph obtained from $G^r$ by removing $D_1$ and all edges in $G^r[S]$.

\begin{claim}\label{c18} 
	For any $i,j\in \{2,\dots,q\}$, if $u_iu_j\in E(G)$,  then $H^{r}_{2}-\{u_i,u_j\}$ does not contain a matching of size $s$.
\end{claim}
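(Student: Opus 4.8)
The plan is to prove Claim~\ref{c18} by contradiction, in the same spirit as Claims~\ref{c2} and \ref{c12}: if $H^r_2-\{u_i,u_j\}$ contained a matching of size $s$, we would splice it together with the edge $u_iu_j$ and with a suitable matching inside the factor-critical component $D_1$ to build a rainbow matching of size $k$ in $G$. First I would suppose, for contradiction, that $u_iu_j\in E(G)$ and that $H^r_2-\{u_i,u_j\}$ has a matching $N$ of size $s$. Since $H^r_2$ is bipartite with one side $S$ of size $s$, the matching $N$ saturates $S$ and hence matches each vertex of $S$ to a distinct vertex of $D=\bigcup_{i\ge 2}\{u_i\}$; in particular $N$ lives entirely outside $V(D_1)$ and outside $\{u_i,u_j\}$.

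Next I would remove from $G^r$ the colors used by $N\cup\{u_iu_j\}$ (that is, delete the at most $s+1$ color classes $EC(e)$ for $e\in N\cup\{u_iu_j\}$) and also delete one vertex $x\in V(D_1)$, chosen so that $D_1-\{x\}$ still has a perfect matching avoiding those forbidden colors. The subgraph $D_1$ has order $2k-1-2s$, so $D_1-\{x\}$ has an even order $2k-2-2s$ and, being obtained from a factor-critical graph by deleting a vertex, has a perfect matching of size $k-1-s$. The point is that after deleting at most $s+1$ color classes this still leaves enough edges: I would invoke the Erdős--Gallai bound (Theorem~\ref{Erd2}) together with the lower bound $e(G^r)\ge f_1(n,k)$ or $\max\{f_2,f_3\}$ from (\ref{eq1}) to guarantee that $D_1$ minus the forbidden colors (and minus one vertex) still contains a matching of size $k-1-s$ avoiding all of those colors. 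Combining this matching of size $k-1-s$ with $N$ (size $s$) and the edge $u_iu_j$ gives a matching of size $(k-1-s)+s+1=k$ in $G$; by construction all its edges carry distinct colors, so it is rainbow — contradicting the assumption that $G$ has no rainbow matching of size $k$.

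The bookkeeping that needs care is the color-avoidance count inside $D_1$. Deleting the $s+1$ forbidden color classes could in principle remove as many as $s+1$ edges incident to each vertex of $D_1$, and also the vertex $x$; so I would estimate $e\big(D_1-\{x\}-\bigcup_{e\in N\cup\{u_iu_j\}}EC(e)\big)\ge e(D_1)-d_{D_1}(x)-(s+1)$ for a well-chosen low-degree $x$, and then check this exceeds $\ex(2k-2-2s,(k-s)K_2)=\binom{2k-3-2s}{2}$ using $e(D_1)\ge e(G^r)-\binom{s}{2}-e_{G^r}(S,V(G^r)\setminus S)$ and the global lower bound on $e(G^r)$. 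I expect this degree-versus-color accounting — making sure the inequalities still go the right way once $n\ge 12$ and $1\le s\le k-2$ are used — to be the main obstacle; everything else is the now-familiar ``extend a matching from a factor-critical component'' argument used repeatedly above.
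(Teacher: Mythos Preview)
Your proposal has a genuine gap: it does not handle the possibility that the color $C(u_iu_j)$ already appears in your matching $N$. Recall that $u_iu_j\notin E(G^r)$ (both $u_i,u_j$ lie in $D$, which is independent in $G^r$), so the unique edge of $G^r$ of color $C(u_iu_j)$ is some edge $uv$, and nothing prevents $uv$ from being one of the edges of $N\subseteq H^r_2-\{u_i,u_j\}$. In that case $N\cup\{u_iu_j\}$ already repeats a color, so no choice of the $D_1$-matching $M_1$ can make $M_1\cup N\cup\{u_iu_j\}$ rainbow; removing the colors of $N\cup\{u_iu_j\}$ from $G^r$ before building $M_1$ fixes the $M_1$ side but cannot repair the clash between $N$ and $u_iu_j$. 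Your sentence ``by construction all its edges carry distinct colors'' is therefore unjustified precisely in this case.

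This is not a minor technicality: it is exactly the case that drives the real work in the paper's proof. When $uv\notin N$ the argument is indeed as easy as you sketch (and does not even need any Erd\H{o}s--Gallai counting, since $D_1$ is factor-critical and only the single color $C(u_iu_j)$ need be avoided). When $uv\in N$ the paper shows, with $v\in S$, that $E_{G^r}(\{v\},V(D_1))=\emptyset$ and that $e(H^r_2-\{u_i,u_j\})\le (n+s-2k-1)(s-1)+1$, then uses these structural bounds to force a contradiction with (\ref{eq1}) for $2\le s\le k-2$; and for $s=1$ it pins down $G^r$ up to a short list and invokes Lemmas~\ref{lemma2} and~\ref{lemma1}. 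None of this is captured by the counting you propose. As a side remark, your edge bookkeeping is also off: since $G^r$ is rainbow, deleting $s+1$ color classes deletes at most $s+1$ edges of $G^r$ in total (not $s+1$ per vertex), and the $s$ edges of $N$ lie in $H^r_2$ and hence outside $D_1$, so removing their colors removes nothing from $D_1$.
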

\begin{proof}

By contradiction. Suppose that there exist $i,j\in \{2,\dots,q\}$ with $u_iu_j\in E(G)$ and a matching $M$  of size $s$ in $H^{r}_{2}-\{u_i,u_j\}$. Let  $uv\in E(G^r)$ be the edge colored $C(u_iu_j)$. Since $D_1$ is factor-critical and has order $2k-1-2s$, the graph $D_1-\{uv\}$ contains a matching $M_1$ of size $k-1-s$ that avoids color $C(uv)$.

If $uv\notin M$, then $M_1\cup M\cup \{u_iu_j\}$ is a rainbow matching of size $k$ in $G$, a contradiction.
Hence, $uv\in M$, and without loss of generality, we let  $v\in S$. 

We now show that
\begin{align}\label{eq7}
E_{G^r}(\{v\},V(D_1))=\emptyset.
\end{align}
Suppose otherwise, and let $w\in V(D_1)$ with $vw\in E(G^r)$. Then $D_1-\{w\}$ contains a matching $M_2$ of size $k-1-s$, and $M_2\cup (M\setminus \{uv\})\cup \{vw,u_iu_j\}$ forms a rainbow matching of size $k$ in $G$, again a contradiction. This establishes (\ref{eq7}).

Next, by Theorem~\ref{lem-Li}, we have
\begin{align}\label{eq8}
e(H^{r}_{2}-\{u_i,u_j\})\leq ex(n+s-2k-1,s,sK_2)+1=(n+s-2k-1)(s-1)+1.
\end{align}
Indeed, if this bound is violated, then $H^{r}_{2}-\{u_i,u_j\}-\{uv\}$ would contain a matching $M_3$ of size $s$. Together with a matching $M_4$ of size $k-1-s$ from $D_1$, $M_3\cup M_4\cup \{u_iu_j\}$ would be a rainbow matching of size $k$, a contradiction.

Now, combining (\ref{eq7}) and (\ref{eq8}), we obtain
\begin{align}\label{eq9}
e(G^r)&= e(G^r[S])+e(D_1)+e_{G^r}(V(D_1),S)+d_{G^r}(u_i)+d_{G^r}(u_j)+e(H^{r}_{2}-\{u_i,u_j\})\notag\\
&\leq \binom{s}{2}+\binom{2k-2s-1}{2}+(2k-2s-1)(s-1)+2s+(n+s-2k-1)(s-1)+1\notag\\
&=\frac{3s^2}{2}+s(n-4k+\frac{7}{2})-n+2k^2-3k+4=:h(s).
\end{align}

If  $2\leq s\leq k-2$, then $e(G^r)\leq \max\{h(2), h(k-2)\}\leq  ex(n,(k-1)K_2)+1$, contradicting inequality (\ref{eq1}).

Therefore, we  have $s=1$. Let $S=\{v\}$. Then from (\ref{eq1}) and (\ref{eq9}),
\begin{align*}
\binom{2k-3}{2}+2\leq e(G^r)\le  h(1)=\binom{2k-3}{2}+3.
\end{align*}
For  $e(G^r)=\binom{2k-3}{2}+2$,  we see that $G^r\in \{ K_{2k-3}\cup P_3\cup \overline{K_{n-2k}},K_{2k-3}^{-}\cup S_3\cup \overline{K_{n-2k-1}}\}$; for $ e(G^r)=\binom{2k-3}{2}+3$,  we have $G^r\cong K_{2k-3}\cup S_3\cup \overline{K_{n-2k-1}}$.
If $G^r$ is  $K_{2k-3}\cup P_3\cup \overline{K_{n-2k}}$, then by Lemma \ref{lemma2}, we have $G$ contains a rainbow matching of size $k$, which is a contradiction.
If $G^r\in \{K_{2k-3}^{-}\cup S_3\cup \overline{K_{n-2k-1}},K_{2k-3}\cup S_3\cup \overline{K_{n-2k-1}}\}$, then we consider a new graph $G^{r'}:=G^r-\{uv\}+\{u_iu_j\}$. Since $C(uv)=C(u_iu_j)$, the graph $G^{r'}$ is a rainbow  subgraph of $G$ and  $G^{r'}\in \{ K_{2k-3}\cup K_3\cup \overline{K_{n-2k}},K_{2k-3}^{-}\cup K_3\cup \overline{K_{n-2k}}\}$. Applying Lemmas \ref{lemma2} and  \ref{lemma1},  we again obtain a contradiction. 
\end{proof}

\begin{claim}\label{c19} 
 $2\leq s\leq k-2$.
\end{claim}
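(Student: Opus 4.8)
Since $1\le s\le k-2$ already holds in Case~3, the plan is to rule out $s=1$. So suppose $s=1$ and write $S=\{v\}$; then $d_1=2k-3$, $D=\{u_2,\dots,u_q\}$ with $|D|=n-2k+2\ge 3$, and since we are in Subcase~3.2 we may fix an edge $u_2u_3\in E(G[D])$. The first thing I would do is invoke Claim~\ref{c18} on $u_2u_3$ with $s=1$: it says $H^r_2-\{u_2,u_3\}$ has no matching of size $1$, i.e.\ no edge, so $N_{G^r}(v)\cap D\subseteq\{u_2,u_3\}$; in particular every edge of $G^r-\{u_2,u_3\}$ lies inside the $(2k-2)$-set $V(D_1)\cup\{v\}$. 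Next I would bound $e(G^r)$ from above: $u_2$ and $u_3$ are singleton components of $G^r-S$, so $d_{G^r}(u_2),d_{G^r}(u_3)\le 1$, and if $e^{*}$ denotes the unique $G^r$-edge of color $C(u_2u_3)$ then $e\bigl(G^r-\{u_2,u_3\}-EC(u_2u_3)\bigr)\ge e(G^r)-3$, all of whose edges lie in those $2k-2$ vertices. Since $\ex(2k-2,(k-1)K_2)=\binom{2k-3}{2}$ (Theorem~\ref{Erd2}), if $e(G^r)\ge\binom{2k-3}{2}+4$ we obtain a rainbow $(k-1)$-matching avoiding $\{u_2,u_3\}$ and the color $C(u_2u_3)$, which with $u_2u_3$ is a rainbow matching of size $k$, a contradiction; hence $e(G^r)\le\binom{2k-3}{2}+3$, while $(\ref{eq1})$ gives $e(G^r)\ge\max\{f_1,f_2,f_3\}\ge f_2=\binom{2k-3}{2}+2$.

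The main work is then a short case analysis on the edge set of $G[D]$, which is nonempty and hence either has two disjoint edges, is a triangle, is a star with at least two edges, or is a single edge. If $E(G[D])$ has two disjoint edges or is a triangle, then applying Claim~\ref{c18} to all of its edges forces $N_{G^r}(v)\cap D=\emptyset$, so $d_{G^r}(u_2)=d_{G^r}(u_3)=0$ and $e\bigl(G^r-\{u_2,u_3\}-EC(u_2u_3)\bigr)=e(G^r)-1\ge\binom{2k-3}{2}+1>\ex(2k-2,(k-1)K_2)$, and the same extension of a rainbow $(k-1)$-matching by $u_2u_3$ gives a contradiction. If $E(G[D])$ is a star with center $u_2$ and at least two edges, say also $u_2u_4\in E(G)$, then Claim~\ref{c18} on $u_2u_3$ and on $u_2u_4$ gives $N_{G^r}(v)\cap D\subseteq\{u_2\}$, hence $d_{G^r}(u_3)=0$ and $d_{G^r}(u_2)\le 1$; now $e\bigl(G^r-\{u_2,u_3\}-EC(u_2u_3)\bigr)\ge e(G^r)-2$ (note $e^{*}$ may equal $vu_2$, in which case deleting $u_2$ already deletes it), so the argument forces $e(G^r)=\binom{2k-3}{2}+2=f_2$, whence $\max\{f_1,f_2,f_3\}=f_2$ and, by $(\ref{eq1})$, $e(G)\ge\binom{n}{2}$, i.e.\ $G=K_n$ --- but then $G[D]=K_{|D|}$ with $|D|\ge 3$ is not a star, a contradiction. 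Finally, if $E(G[D])=\{u_2u_3\}$, then $G$ misses $\binom{|D|}{2}-1$ edges inside $D$, so $e(G)\le\binom{n}{2}-\binom{n-2k+2}{2}+1$; plugging into $(\ref{eq1})$ and using $f_1=\binom{2k-2}{2}-\binom{n-2k+2}{2}+2$ yields $e(G^r)\ge\binom{2k-2}{2}+1$, which contradicts $e(G^r)\le\binom{2k-3}{2}+3$ because $\binom{2k-2}{2}-\binom{2k-3}{2}=2k-3>2$ for $k\ge 3$ (and $k\ge 3$ since $s=1\le k-2$).

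Every case ends in a contradiction, so $s\ge 2$, proving the claim. I expect the delicate part to be the edge-count bookkeeping after deleting $\{u_2,u_3\}$ together with the color class $C(u_2u_3)$ --- in particular noticing that $e^{*}$ may coincide with $vu_2$ in the star case, and that the Erd\H{o}s--Gallai bound must be applied on the $2k-2$ vertices $V(D_1)\cup\{v\}$ (which is exactly what Claim~\ref{c18} makes legitimate) rather than on all of $V(G^r)$; the remaining estimates are routine arithmetic with $f_1,f_2,f_3$ and with the number of missing edges of $G$ inside $D$.
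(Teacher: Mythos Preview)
Your proof is correct. Both your argument and the paper's start from the same facts---Claim~\ref{c18} applied at $s=1$ forces $N_{G^r}(v)\cap D\subseteq\{u_2,u_3\}$, and the Erd\H{o}s--Gallai bound on the $(2k-2)$-set $V(D_1)\cup\{v\}$ then caps $e(G^r)$---but you organize the remaining work differently. The paper splits on $e(H_2^r)\in\{1,2\}$ (i.e.\ on how many of $vu_2,vu_3$ lie in $G^r$) and uses Claim~\ref{c18} contrapositively on the potential edges $u_4u_2,u_4u_3$ to force $e(G)<\binom{n}{2}$, obtaining lower bounds $e(G^r)\ge f_2+1$ or $f_2+2$ that clash with the universal upper bound $e(G^r)\le e(H_2^r)+\binom{2k-3}{2}+1$. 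You instead split on the shape of $E(G[D])$: when it contains two disjoint edges or a triangle you intersect several applications of Claim~\ref{c18} to kill $N_{G^r}(v)\cap D$ entirely; when it is a star you reduce to $e(G^r)=f_2$ and hence $G=K_n$, which contradicts the star hypothesis; and when it is a single edge you count the $\binom{|D|}{2}-1$ missing edges of $G$ and invoke $f_1$ rather than $f_2$. Your route is slightly longer but arguably more transparent about why each configuration fails, and the use of $f_1$ in the single-edge case is a nice touch that the paper's argument avoids by working only with $f_2$.
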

\begin{proof}

Suppose, for contradiction, that $s=1$. Let $S=\{v\}$. Recall that $d_1=2k-1-2s=2k-3$ and $D=\bigcup_{i=2}^{n-2k+3}\{u_i\}$.  By Claim \ref{c18}, we have $e(H_{2}^{r}-\{u_2,u_3\})=0$, which implies  $1\le e(H_{2}^{r})\le 2$.  

If $e(H_{2}^{r})=1$ assume without loss of generality that  $u_2v\in E(G^r)$ and $u_3v\notin E(G^r)$. By Claim \ref{c18},  $u_4u_3\notin E(G)$, so $e(G)\leq \binom{n}{2}-1$ and hence by (\ref{eq1}), $e(G^r)\geq \binom{2k-3}{2}+3$. If instead $e(H_{2}^{r})=2$, i.e. $u_2v,u_3v\in E(G^r)$, then  $u_4u_2,u_4u_3\notin E(G)$, so $e(G)\leq \binom{n}{2}-2$, and again by (\ref{eq1}),  $e(G^r)\geq \binom{2k-3}{2}+4$.  

Since $G^r-D-EC(u_2u_3)$ contains no matching of size $k-1$, we have 
\begin{align*}
e(G^r-D)\leq ex(2k-2,(k-1)K_2)+1=\binom{2k-3}{2}+1.
\end{align*}
and therefore
\begin{align*}
 e(G^r)\leq e(H_{2}^{r})+e(G^{r}-D)\leq  e(H_{2}^{r})+\binom{2k-3}{2}+1,
\end{align*}
For $e(H_{2}^{r})=1$, this yields $e(G^r)\le\binom{2k-3}{2}+2$, and for $e(H_{2}^{r})=2$, we obtain $e(G^r)\le\binom{2k-3}{2}+3$. In both cases, the upper bound contradicts the corresponding lower bound, completing the proof.
\end{proof}

By Claim \ref{c19},  we  may assume  $S=\{v_1,\ldots,v_s\}$ with $2\le s \le k-2$.

\begin{claim}\label{c20}
	$\nu(H_2^r)=s$.
\end{claim}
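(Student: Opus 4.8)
The plan is to argue by contradiction, assuming $\nu(H^r_2)\le s-1$. Note first that $H^r_2$ is bipartite with parts $S$ and $D$, where $|D|=q-1=n-2k+s+1>s$ by $n\ge 2k+1$; hence a matching of size $s$ would already saturate $S$, so $\nu(H^r_2)\le s$ and the task is to exclude $\nu(H^r_2)=s-1$. Before estimating, I would record a structural consequence of Theorem~\ref{GE-thm}(ii): a maximum matching $M$ of $G^r$ has $k-1$ edges, saturates $S$, and sends the vertices of $S$ into pairwise distinct components of $G^r-S$; at most one of those components is $D_1$, and the edges of $M$ landing in $D$ form a matching of $H^r_2$ of size $\le\nu(H^r_2)\le s-1$. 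Therefore exactly $s-1$ edges of $M$ lie in $D$ and exactly one, say $v_0w_0$ with $v_0\in S$ and $w_0\in V(D_1)$, lies in $D_1$; in particular $\nu(H^r_2)=s-1$, $v_0w_0\in E(G^r)$, and $M\cap E(D_1)$ is a perfect matching of $D_1-w_0$.

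The routine half of the proof is a counting estimate. Since $G^r[D]$ is edgeless, $E(G^r)$ splits into the edges inside $S\cup V(D_1)$ and the edges of $H^r_2$; combining the trivial bound on the first part with K\"onig's theorem on $H^r_2$ (which gives $e(H^r_2)\le(s-1)|D|$) yields
\[
e(G^r)\le\binom{2k-1-s}{2}+(s-1)(n-2k+s+1).
\]
Comparing this with (\ref{eq1}), a short case analysis shows that the right-hand side is strictly below $\max\{f_1(n,k),f_2(n,k),f_3(n,k)\}$ — a contradiction — for all $s\ge 3$, and for $s=2$ whenever $n$ is small or large relative to $k$ (using $f_3$, respectively $f_1$). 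What survives is only $s=2$ together with a small band of intermediate values of $n$ (the regime where $f_2$ or $f_3$ is the maximum), which needs a genuinely rainbow argument.

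For that residual case ($s=2$) the same two inequalities force $e(G)\ge\binom n2-O(k)$, so $G$, and in particular $G[V(D_1)]$ and $G[D]$, miss only $O(k)$ edges; since here $|V(D_1)|=2k-5$ and $|D|=n-2k+3$ with $n$ not large, both subgraphs are dense enough for Theorem~\ref{Dir} to furnish (near-)perfect matchings in $V(D_1)$ and in its large subsets, and $G[D]$ still has edges inside any $(n-2k+2)$-subset. The matching $M$ covers $S\cup V(D_1)$ together with one vertex of $D$, leaving $n-2k+2\ge 3$ uncovered vertices of $D$, among which $G$ has an edge $u_iu_j$. If $C(u_iu_j)$ does not occur among the $k-1$ colours of $M$, then $M\cup\{u_iu_j\}$ is a rainbow $k$-matching in $G$, a contradiction. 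Otherwise the unique $G^r$-edge $e'$ of colour $C(u_iu_j)$ lies in $M$, and I reroute: if $e'\subseteq V(D_1)-w_0$, replace $M\cap E(D_1-w_0)$ by a perfect matching of $D_1-w_0-e'$ (Theorem~\ref{Dir}); if $e'=v_0w_0$, replace it by $v_0w_0'$ for another $G^r$-neighbour $w_0'\in V(D_1)$ of $v_0$, together with a perfect matching of $D_1-w_0'$ (factor-criticality of $D_1$); and if $e'$ is the unique $H^r_2$-edge of $M$, swap it for another $H^r_2$-edge on the same vertex. Each rerouting keeps the covered vertex set unchanged and discards the colour $C(u_iu_j)$, so adjoining $u_iu_j$ again produces a rainbow $k$-matching in $G$. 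The degenerate possibilities in this step (for instance $v_0$ having a single $G^r$-neighbour in $V(D_1)$ and none in $D$, or $e(H^r_2)=1$) are excluded because each of them forces $e(G^r)<f_2(n,k)$, contradicting (\ref{eq1}).

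The hard part will be exactly this last step: getting the colour bookkeeping of the rerouting to close (possibly re-picking the edge $u_iu_j$ of $G[D]$ after each reroute, and checking one never loops) and verifying that the degenerate configurations are truly eliminated by the edge-count lower bounds. The counting half, by contrast, is soft: bipartite matching theory together with crude comparisons among $f_1$, $f_2$, and $f_3$.
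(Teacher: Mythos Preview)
Your structural opening (using Theorem~\ref{GE-thm}(ii) to pin down $\nu(H_2^r)=s-1$ and the shape of a maximum matching $M$) is correct, and the K\"onig bound $e(H_2^r)\le (s-1)|D|$ together with a convexity check does dispose of all $s\ge 3$, just as you say. The residual $s=2$ range is also real: for instance at $k=25$, $n=58$ one has $p(2,1)=1092>1085=\max_i f_i$, so counting alone fails there.

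The gap is in your rerouting for this residual range. When the conflicting colour sits on the unique $H_2^r$-edge $v_1u_p$ of $M$ (your ``Case C''), swapping to another $H_2^r$-edge necessarily changes which $D$-vertex is covered; you may then have to re-pick $u_iu_j$, and nothing you have written prevents the new pick from again colliding with the new $H_2^r$-edge. The degenerate-case exclusion you sketch (``$v_0$ with a single $G^r$-neighbour in $V(D_1)$ and none in $D$'') does not cover the actual obstruction here, which is that $N_{H_2^r}(v_1)$ might be large but concentrated on $\{u_p,u_i,u_j\}$ for every admissible $\{u_i,u_j\}$. A more careful analysis (splitting on whether the K\"onig cover vertex lies in $S$ or in $D$, then bounding $|N_{H_2^r}(v_1)|$ against the number of missing edges of $G[D]$) can close this, but it is several further cases and not the one-liner your outline suggests; as written the proof is incomplete.

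For comparison, the paper avoids rerouting entirely. Instead of K\"onig it applies Hall's theorem to obtain a defect set $T\subseteq S$ with $|N_{H_2^r}(T)|<|T|$, which yields the sharper bound $e(G^r)\le\binom{2k-1-s}{2}+|T|(|T|-1)+(s-|T|)(n-2k+s+1)$. For $s\ge 3$ this already contradicts~(\ref{eq1}); for $s=2$ the two cases $|T|=1$ and $|T|=2$ are handled directly by locating an edge $u_iu_j\in E(G[D])$ (and, in the $|T|=1$ case, two $G^r$-edges $v_2u_k,v_2u_l$ disjoint from it), and then invoking the Erd\H{o}s--Gallai bound $\mathrm{ex}(2k-4,(k-2)K_2)$ on $e(G^r[V(D_1)\cup\{v_1\}])$ to extract a $(k-2)$-matching avoiding the colour $C(u_iu_j)$. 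No rerouting or re-picking is needed.
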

\begin{proof}

By contradiction. Suppose that $H_2^r$ does not contain a  matching of size $s$. By Hall's theorem \cite{Hal}, there exists a nonempty subset $T\subseteq S$ such that $\left|N_{H_2^r}(T)\right|< \left|T\right|$, where $1\le \left|T\right|\le s$. Then 
\begin{align}\label{eq999}
	e(G^r)&=e(G^r[S\cup V(D_1)])+e(H_2^r)\notag\\
	&\le \binom{2k-s-1}{2}+\left|T\right|(\left|T\right|-1)+(s-\left|T\right|)(n-2k+s+1 ):=p(s,\left|T\right|).
\end{align}

If $3\le s\le k-2$,  then 
\[e(G^r)\le \max\{p(3,1),p(k-2,1),p(3,3),p(k-2,k-2)\}<ex(n,(k-1)K_2)+2,\]
contradicting (\ref{eq1}). Hence, $s=2$.

We first consider the situation when $|T|=1$.  Assume without loss of generality that $v_1\in S$ with $d_{H_2^r}(v_1)=0$. Since $s=2$ and $d_1=2k-5$, we have $|D|\ge 4$. Note that $e(G^r[V(D_1)\cup S])\le \binom{2k-3}{2}$ and $e(G^r)\ge \binom{2k-3}{2}+2$ by (\ref{eq1}). So $e_{G^r}(\left\{v_2\right\},D)\ge 2$.

We now show that there exists distinct indices $i,j\in\{2,\dots,q\}$ such that $u_iu_j\in E(G)$ and $e_{G^r}(\{v_2\},D\setminus \left\{u_i,u_j\right\})\ge 2$. For $e(G)=\binom{n}{2}$,  the assertion holds since $|D|\ge 4$ and  $e_{G^r}(\left\{v_2\right\},D)\ge 2$. So we next consider that $e(G)\le \binom{n}{2}-1$. Suppose for contradiction that for every  $u_iu_j\in E(G)$, we have $e_{G^r}(\{v_2\},D\setminus \left\{u_i,u_j\right\})\le 1$. Since $u_2u_3\in E(G)$, it follows that $e_{G^r}(\left\{v_2\right\},D)\le 3$ and hence $e(G^r)\le \binom{2k-3}{2}+3$. This forces $e(G)=\binom{n}{2}-1$ and  $e_{G^r}(\left\{v_2\right\},D)=3$. Let $u_i,u_j,u_k,u_l$ be distinct vertices in $D$ with  $u_iv_2,u_jv_2,u_kv_2\in E(G^r)$. Then by assumption, $u_iu_l,u_ju_l,u_ku_l\notin E(G)$, contradicting $e(G)=\binom{n}{2}-1$.  This establishes the assertion. 

Let $u_i,u_j,u_k,u_l$ be distinct vertices in $D$ with $u_iu_j\in E(G)$ and $v_2u_k,v_2u_l\in E(G^r)$. Note that $e(G^r)=e(G^r[V(D_1)\cup \left\{v_1\right\}])+e_{G^r}(\{v_2\},V(G)\setminus \{v_2\})$. Since $n\ge 12$, it follows from \eqref{eq1} that
\begin{align*}
e(G^r[V(D_1)\cup \left\{v_1\right\}])&\ge \max\left\{f_2(n,k),f_3(n,k)\right\}-e_{G^r}(\{v_2\},V(G)\setminus \{v_2\})\\
&\ge \max\left\{f_2(n,k),f_3(n,k)\right\}-(n-1)\\
&> ex(2k-4,(k-2)K_2)+1.
\end{align*}
Hence, $G^r[V(D_1)\cup \left\{v_1\right\}]$ contains a matching  $M_5$ of size $k-2$ that avoids color $C(u_iu_j)$. Then either $M_5\cup \left\{u_iu_j,v_2u_k\right\}$ or $M_5\cup \left\{u_iu_j,v_2u_l\right\}$ is a rainbow matching of size $k$ in $G$, a contradiction.

Next we consider the case $|T|=2$. By (\ref{eq999}), $	e(G^r)\le p(2,2)=\binom{2k-3}{2}+2$. 
Then  (\ref{eq1}) implies $e(G^r)=\binom{2k-3}{2}+2$ and $G\cong K_n$.  Since  $|N_{H_2^r}(T)|\le 1$,  there exists $u_i\in D$ such that $u_iv_1,u_iv_2\in E(G^r)$. Take $u_j,u_l\in D\setminus \{u_i\}$, then $$e(G^r[S\cup V(D_1)\cup \left\{u_i\right\}])-EC(u_ju_l)>ex(2k-2,(k-1)K_2),$$ 
so there exists a matching $M_6$ of size $k-1$ in $G^r[S\cup V(D_1)\cup \left\{u_i\right\}]$ avoiding color $C(u_ju_l)$. Then $M_6\cup \{u_ju_l\}$ is a rainbow matching of size $k$ in $G$, a contradiction.

In all cases, we reach a contradiction, proving the claim.
\end{proof}

Next, we establish upper and lower bounds on the number of edges in $G^r$ and $G$, respectively.
By Claim \ref{c18} and Theorem \ref{lem-Li}, we have $e(H^{r}_{2}-\{u_2,u_3\})\leq (s-1)(n-2k+s-1)$. Therefore, by Claim \ref{c19}, we have
\begin{align}\label{ine1}
	e(G^r)&= e(G^r[S])+e(D_1)+e_{G^r}(V(D_1)\cup\{u_2,u_3\},S)+e(H^{r}_{2}-\{u_2,u_3\})\notag\\
	&\leq \binom{s}{2}+\binom{2k-2s-1}{2}+s(2k-2s+1)+(s-1)(n-2k+s-1)\notag\\
	&\le \max\Big\{\binom{2k-5}{2}+n+2k-4,\binom{k-2}{2}+(k-3)(n-k-3)+5k-7\Big\}\\
	&\le ex(n,(k-1)K_2)+(n-2k+5)\label{eq22}.
\end{align}
By Claim \ref{c20},  let $M$ be a matching of size $s$ in $H^{r}_{2}$. Then by Claim \ref{c18},  all vertices in $V(H^{r}_{2})$ not covered by $M$ forms an independent set in $G$. It follows that 
\begin{align}\label{eq11}
e(G)\le \binom{n}{2}-\binom{|V(H^{r}_{2})|-2s}{2}=\binom{n}{2}-\binom{n+1-2k}{2}.
\end{align}
Combining inequalities (\ref{eq1}) and (\ref{eq11}), we obtain
\begin{align}\label{eq12}
e(G^r)\ge \max\{f_1(n,k),f_2(n,k),f_3(n,k)\}+\binom{n+1-2k}{2}.
\end{align}

By (\ref{eq1}) and (\ref{eq22}),
\begin{align}\label{eq0}
e(G)&\ge \binom{n}{2}+\max\{f_2(n,k),f_3(n,k)\}-e(G^r)=\binom{n}{2}-(n-2k+3).
\end{align}
Combining  (\ref{eq11}) and (\ref{eq0}), we can get $n\le 2k+3$ by calculating $n-2k+3\ge \binom{n+1-2k}{2}$.
Note that $n\ge 12$ and $2k+3\ge n\ge 2k+1$. We substitute $n=2k+1$, $n=2k+2$, and $n=2k+3$ into inequality (\ref{eq12}) respectively, and the calculations show that all lead to a contradiction with  (\ref{ine1})  (the detailed  calculation  can be found in  Appendix).

This completes the proof of Theorem \ref{main-thm}. \qed

\vspace{3mm}
\noindent\textbf{Remark}: The conclusion also holds for graphs of small order (i.e., $n \le 11$), with the exception of the case $n=2k=4$, for which the condition becomes $e(G) + c(G) \ge 10$.  However, as the proof is straightforward yet tedious to enumerate, we omit the details here.

\section*{Acknowledgement}
The authors would like to thank Binlong Li and Yang Meng for their interest in this work as well as their valuable discussions concerning Section 3.1.

%%%%%%%%%%%%%%%%%%%%%%%%%%%%%%%%%%%%%%%%%%%%%%%%%%%%%%%%%%%%%%%%%%%%%%
%%%%%%%%%%%%%%%%%%%%%%   Bibliography   %%%%%%%%%%%%%%%%%%%%%%%%%%%%%%
%%%%%%%%%%%%%%%%%%%%%%%%%%%%%%%%%%%%%%%%%%%%%%%%%%%%%%%%%%%%%%%%%%%%%%

\newpage
\section*{ Appendix}

In this section, we give a detailed calculation to show a contradiction between inequalities (\ref{eq12}) and (\ref{ine1})  for $2k+1\le n\le 2k+3$ and $n\ge 12$. Write 
\begin{align*}
p=\max\Big\{\binom{2k-5}{2}+n+2k-4,\binom{k-2}{2}+(k-3)(n-k-3)+5k-7\Big\}.
\end{align*}

\vspace{2mm}\textbf{Case 1.}~$n=2k+1$.
\vspace{2mm}

One can see that $f_1(2k+1,k)=2k^2-5k+2$, $f_2(2k+1,k)=2k^2-7k+8$, and $f_3(2k+1,k)=(3k^2-3k-2)/2$.
By a direct calculation,  we have 
\begin{align*}
\max\{f_1(2k+1,k),f_2(2k+1,k),f_3(2k+1,k)\}=
\begin{cases}
	f_3(2k+1,k),& \text{if $ k=6$;}\\
	f_1(2k+1,k),& \text{if $k\ge 7$.}
\end{cases}
\end{align*}
Thus, by (\ref{eq12}), we have
\begin{align*}
e(G^r)\ge\frac{3k^2-3k}{2}>\max\Big\{2k^2-7k+12,\frac{3k^2-5k+4}{2}\Big\} =p
\end{align*}
for $4\leq k\leq 6$; and  for $ k\geq 7$, we have
\begin{align*}
e(G^r)\ge 2k^2-5k+3> \max\Big\{2k^2-7k+12,\frac{3k^2-5k+4}{2}\Big\}=p,
\end{align*}
contradicting (\ref{ine1}).

\vspace{2mm}

\vspace{2mm}\textbf{Case 2.}~$n=2k+2$.
\vspace{2mm}

Similar to the proof of Case 1, we can get  
\begin{align*}
\max\{f_1(2k+2,k),f_2(2k+2,k),f_3(2k+2,k)\}=
\begin{cases}
	f_3(2k+2,k),& \text{if $5\leq k\leq 8$;}\\
	f_1(2k+2,k),& \text{if $k\ge 9$,}
\end{cases}
\end{align*}
where $f_1(2k+2,k)=2k^2-5k-1$, $f_2(2k+2,k)=2k^2-7k+8$, and $f_3(2k+2,k)=(3k^2-k-6)/2$.
Thus, by (\ref{eq12}),  we have
\begin{align*}
e(G^r)\ge\frac{3k^2-k}{2}>\max\Big\{2k^2-7k+13,\frac{3k^2-3k-2}{2}\Big\} =p
\end{align*}
for $4\leq k\leq 8$; and  for $ k\geq 9$,
\begin{align*}
e(G^r)\ge 2k^2-5k+2> \max\Big\{2k^2-7k+13,\frac{3k^2-3k-2}{2}\Big\}=p,
\end{align*}
contradicting  (\ref{ine1}).

\vspace{2mm}\textbf{Case 3.}~$n=2k+3$.
\vspace{2mm}

Similar to the proof of Case 1, we can get  
\begin{align*}
\max\{f_1(2k+3,k),f_2(2k+3,k),f_3(2k+3,k)\}=
\begin{cases}
	f_3(2k+3,k),& \text{if $5\leq k\leq 10$;}\\
	f_1(2k+3,k),& \text{if $k\ge 11$,}
\end{cases}
\end{align*}
where $f_1(2k+3,k)=2k^2-5k-5$, $f_2(2k+3,k)=2k^2-7k+8$, and $f_3(2k+3,k)=(3k^2+k-10)/2$.
Thus, by (\ref{eq12}),  we have 
\begin{align*}
e(G^r)\ge\frac{3k^2+k+2}{2}>\max\Big\{2k^2-7k+14,\frac{3k^2-k-8}{2}\Big\} =p
\end{align*}
for $4\leq k\leq 10$; and  for $ k\geq 11$, we have
\begin{align*}
e(G^r)\ge 2k^2-5k+1> \max\Big\{2k^2-7k+14,\frac{3k^2-k-8}{2}\Big\}=p,
\end{align*}
contradicting with (\ref{ine1}).

\end{document}